\documentclass[a4paper,11pt]{article}
\usepackage[hyperfootnotes=true]{hyperref}
\usepackage[top=1.25in, bottom=1.25in, left=1.25in, right=1.25in]{geometry}
\usepackage{amsfonts}
\usepackage{enumitem}
\usepackage{amsmath}
\usepackage{amsthm}
\usepackage{amssymb}
\usepackage{url}
\usepackage{bm}
\usepackage{bbm}
\usepackage{booktabs}
\usepackage{xcolor}
\usepackage{comment}
\usepackage{adjustbox}
\usepackage{mathtools}
\usepackage{siunitx}
\usepackage{setspace,lineno}
\usepackage{natbib}
\usepackage{float}
\usepackage{algorithm}
\usepackage{algpseudocode}
\usepackage{xcolor,algorithmicx,algorithm}
\usepackage{graphicx}
\usepackage{subcaption}
\usepackage{tikz-cd}
\usepackage{pgfplots}
\pgfplotsset{compat=1.17}
\usepackage{pgfplotstable}
 \usepgfplotslibrary{fillbetween}
\usepackage{graphicx}
\hypersetup{colorlinks=true,raiselinks=true,breaklinks=true,citecolor=blue}

\graphicspath{ {./pix} }
\setlist[enumerate]{leftmargin=.5in}
\setlist[itemize]{leftmargin=.5in}
\DeclareMathOperator{\spn}{span}

\DeclareMathOperator{\trace}{trace}

\DeclareMathOperator*{\argmin}{arg\,min \, }

\newcommand{\T}{{\top}}

\newcommand{\E}{{\mathbb E}}

\newcommand{\Q}{{\mathbb Q}}

\newcommand{\R}{{\mathbb R}}

\newcommand{\Fcal}{{\mathcal F}}
\newcommand{\Gcal}{{\mathcal G}}
\newcommand{\Hcal}{{\mathcal H}}

\newcommand{\Ocal}{{\mathcal O}}

\newcommand{\Rcal}{{\mathcal R}}

\newcommand{\Vcal}{{\mathcal V}}

\newcommand{\Xcal}{{\mathcal X}}
\newcommand{\Ycal}{{\mathcal Y}}
\newcommand{\Zcal}{{\mathcal Z}}
\newcommand{\bs}[1]{{\bm #1}}

\newcommand{\dd}{\operatorname{d}\!}

\newcommand{\isdef}{\mathrel{\mathrel{\mathop:}=}}

\newcommand{\StateIndent}{\hspace{\algorithmicindent}}

\newtheorem{proposition}{Proposition}[section]

\newtheorem{theorem}[proposition]{Theorem}

\newtheorem{remark}[proposition]{Remark}

\definecolor{darkred}{rgb}{0.9,0,0}

\title{Low-rank kernel methods for American option pricing}
\date{}
\author{Michael Multerer\footnote{
		Universit\`a della Svizzera italiana, 6900 Lugano, 
  Switzerland (michael.multerer@usi.ch).}
	\and
	Paul Schneider\footnote{
		Universit\`a della Svizzera italiana, 6900 Lugano, Switzerland, 
    and Swiss Finance Institute, 8006 Z\"urich, Switzerland
        (paul.schneider@usi.ch).}
	\and
	Chiara Segala\footnote{
		Universit\`a della Svizzera italiana, 6900 Lugano, 
  Switzerland (chiara.segala@usi.ch).}
        }

\begin{document} 
\maketitle

\begin{abstract}
We propose a scalable and theoretically grounded low-rank conditional
expectation model for recursive Monte Carlo optimal stopping problems, in
particular American option pricing.
Our method reformulates the estimation of continuation values as a learning
problem in a reproducing kernel Hilbert space, in which the conditional
expectation is represented as a linear operator acting on future payoffs. This
perspective yields an offline-online decomposition: the operator is learned once
from simulated data and subsequently reused across all exercise dates,
eliminating the need to recompute regression models at each step of the backward
recursion.
We establish convergence guarantees and derive bounds quantifying the
approximation errors across exercise dates. Numerical experiments demonstrate
the speed and accuracy  of the proposed approach relative to extant methods.
\end{abstract}

\noindent\textbf{Keywords:} American option pricing, reproducing kernel Hilbert
spaces, conditional mean embedding, low-rank approximation, optimal stopping.

\noindent\textbf{MSC classification:} 91G60, 91G20, 60G40, 46E22, 65F55 \\

\section{Introduction}
The valuation and optimal exercise of American-style derivatives is one of the
most enduring problems in mathematical finance. Unlike their European
counterparts, American options grant the holder the right to exercise at any
time prior to maturity, and their fair price is therefore the value of an
optimal stopping problem under a risk-neutral measure \citep{karatzas1998}.
Closed-form solutions are available only in a handful of special cases, while
classical lattice and PDE-based techniques
\citep{brennanschwartz77,coxrossrubinstein79,baroneadesiwhaley87} scale poorly
with the dimension of the state vector. As pricing problems in modern markets
routinely involve baskets of assets, stochastic volatility, and stochastic
interest rates, the development of accurate and computationally tractable
methods for high-dimensional optimal stopping remains an active and pressing
research topic.

The dominant paradigm for high-dimensional American option pricing to date is
the regression-based least-squares Monte Carlo (LSM) approach, introduced 
by \citet{carriere96}, \citet{tsitsiklisvanroy01} and 
\citet{LongstaffSchwartz01}. The method approximates the conditional
continuation value at each exercise date by projecting future discounted cash
flows onto a finite linear span of basis functions of the current state, and
uses the resulting estimates inside a backward dynamic programming recursion.
Its conceptual simplicity, ease of implementation, and compatibility with
arbitrary path generators have made LSM the standard benchmark in both industry
and academic work \citep{glasserman04}. Complementary dual formulations were
developed by \citet{rogers02} and \citet{haugh-kogan04}.

The theoretical analysis of LSM has progressed in parallel with its widespread
adoption. Early consistency results were established by
\citet{clementlambertonprotter02} for fixed finite-dimensional spans and by
\citet{stentoft04} for sieve-type bases. \citet{Egloff2005} embedded LSM in the
framework of empirical risk minimization for optimal stopping, deriving
finite-sample bounds under convexity and closure assumptions on the
approximating class. A unifying convergence theory for LSM-type schemes was
developed by Zanger in a sequence of works
\citep{Zanger2009123,Zanger2013503,Zanger2018447,Zanger2020923}, 
covering settings ranging from bounded $L^2$-approximators of finite 
Vapnik-Chervonenkis dimension to nonlinear neural-network classes with 
possibly unbounded payoffs, and progressively relaxing assumptions on convexity, 
closure, and independence of sample paths.

A second line of recent research bypasses regression entirely and parameterizes
either the stopping rule, the value function, or an associated PDE solution by
deep neural networks. \citet{kohlerkrzyzaktodorovic10} pioneered the use of
feed-forward networks within the LSM regression step. The deep optimal stopping
framework of \citet{becker2019deep} parameterizes the stopping decision at each
date directly as a neural network and learns it by stochastic gradient ascent on
the expected payoff, while \citet{becker2021solving} extends this idea to
genuinely high-dimensional optimal stopping problems with rigorous error
analysis. The closely related contribution of \citet{becker2020pricing} develops
a unified deep-learning methodology for both pricing and hedging American-style
options and demonstrates its accuracy on benchmarks involving up to several
hundred underlyings. Complementary approaches solve the associated free-boundary
or variational problem directly: \citet{hanjentzene18} reformulate
high-dimensional PDEs as backward stochastic differential equations approximated
by neural networks. \citet{sirignanospiliopoulos18} introduce the deep Galerkin
method, \citet{hurephamwarin20} propose deep backward schemes for nonlinear
PDEs, including those associated with optimal stopping,
and \citet{lapeyrelelong21} study neural-network regression in the Bermudan
setting. While these methods deliver impressive performance in high dimensions,
they typically require costly retraining of network parameters at each exercise
date or for each new contract, and their statistical properties are still less
well understood than those of regression-based estimators.

The present work develops an alternative approach that combines the theoretical
transparency of regression-based LSM with the data-driven flexibility of modern
machine learning. We replace the explicit and oftentimes ad-hoc choice of basis
functions in LSM by a nonparametric estimator in a reproducing kernel Hilbert
space (RKHS) and reformulate the estimation of continuation values through the
\emph{conditional mean embedding} (CME) operator 
\citep{songetal09,gruenewaelderetal12,smola2007hilbert,
klebanovschustersullivan20,NEURIPS2020_f340f1b1}. This perspective reveals that
the conditional expectation entering the dynamic programming recursion is a
single linear operator on the RKHS, which can be learned \emph{once} and
\emph{offline} from simulated paths and reused across all exercise dates and
strikes considered. We thereby eliminate the per-step regression that dominates
the computational cost of LSM and similar deep-learning schemes, and obtain a
clean offline-online decomposition.

Naive kernel methods, however, suffer from the well-known $\mathcal{O}(n^3)$
scaling in the number of simulated paths $n$. To restore practicality we exploit
the spectral decay of the relevant kernel matrices and introduce a low-rank
approximation based on the pivoted Cholesky decomposition \citep{HPS12}, in the
spirit of Nystr\"om-type approximations \citep{Seeger}. This yields a
closed-form estimator in a reduced subspace whose cost scales linearly in $n$
and whose accuracy can be controlled through the truncation tolerance.
Our contributions can be summarized as follows.
\begin{enumerate}
    \item We reformulate the continuation value estimation step in American
      option pricing as the learning of a conditional mean embedding in a
      tensor-product RKHS,  leading to an offline-online algorithm that learns
      the conditional expectation  operator once and reuses it across all
      exercise dates.
    \item We introduce a low-rank approximation of the resulting kernel system
      based on pivoted Cholesky factorization, yielding a closed-form estimator
      in a reduced subspace at substantially lower computational cost than the
      dense kernel solution.
    \item We establish convergence of the proposed scheme and derive explicit
      bounds on the approximation errors that propagate through the backward
      recursion. The results extend the existing convergence theory of LSM-type
      algorithms to nonparametric kernel estimators with low-rank truncation.
    \item We provide numerical experiments on standard benchmarks demonstrating
      that the method matches or outperforms the Longstaff-Schwartz benchmark in
      accuracy while offering a substantial speed-up.
\end{enumerate}

The remainder of the paper is organized as follows. Section~\ref{sec:american}
introduces the problem of American option pricing and reviews simulation-based
regression. Section~\ref{sec:cme} presents the conditional mean embedding
framework and develops the low-rank kernel estimator based on pivoted Cholesky
decomposition. Section~\ref{sec:conv_rate} establishes the convergence and error
analysis of the proposed scheme, and Section~\ref{sec:offline_cme} extends the
analysis to the backward recursion. Section~\ref{sec:num_test} reports numerical
results on standard benchmarks, and Section~\ref{sec:conclusion} concludes.

\section{American option pricing}\label{sec:american}

One of the central challenges in option pricing theory is the valuation and
optimal exercise of derivatives with American-style exercise features. Such
instruments appear across major financial markets and despite significant
progress in quantitative finance, valuing and optimally exercising American
options remains difficult, especially when multiple factors influence the
option's value. At any potential exercise date, the holder of an American option
must compare the immediate exercise payoff with the expected continuation value.
The option should be exercised when the immediate payoff exceeds the expected
value of keeping the option alive. Consequently, the optimal exercise strategy
depends on accurately estimating the conditional expectation of the continuation
payoff. Our approach addresses this by estimating the conditional expectation
through a kernel-based conditional mean embedding approach.

\subsection{Optimal stopping and conditional expectation framework}
Let $(\Omega, \mathcal{F}, \mathbb{Q})$ be a complete probability space and
consider a finite time horizon $[0, T]$. We assume the existence of a filtration
$\{\mathcal{F}_t\}_{t \in [0, T]}$ generated by the underlying price process
$\{S_t\}_{t\in[0,T]}$, i.e., $\mathcal{F}_t=\sigma(\{S_u : 0\leq u\leq t\})$,
with $\mathcal{F}_T = \mathcal{F}$. Following the no-arbitrage framework, the
measure $\mathbb{Q}$ is taken to be a risk-neutral pricing measure, 
under which discounted price processes are martingales and expectations 
correspond to risk-neutral pricing, i.e.,
\[
\mathbb{E}\left[
  e^{-r t} S_t
  \,\middle|\,
  \mathcal{F}_s
\right]
=
e^{-r s} S_s,
\quad s\in[0,t],\ t\leq T,
\]
for some constant risk-free interest rate $r \in \mathbb{R}$.
Within this setting, the value of an American option equals the maximum
expected discounted value of its cash flows, where the maximization is
taken over all stopping times adapted to the filtration, as formalized 
in the foundational work \citet{karatzas1998}.

To illustrate the main idea, we consider the case in which an American
put option can only be exercised at discrete times
$0 < t_1 < t_2 < \dots < t_{n_T} = T$, $n_T \in \mathbb{N}$. 
Let $\{\mathcal{F}_k\}_{k=0,\dots,n_T}$ be the associated filtration
generated by $\{S_k\}_{k=0,\dots,n_T}$. At the maturity date $T$, the option
is exercised, if it is in the money. At any earlier exercise time, 
the investor decides whether to exercise or to continue holding the option 
until the next decision date. The optimal policy is then to exercise as soon 
as the immediate payoff equals or exceeds the expected continuation value.
At a given decision time $t_k$, the payoff from immediate exercise is
fully determined by the current state of the underlying and is therefore
$\mathcal{F}_k$-measurable. In contrast, the continuation value depends
on uncertain future price movements and is defined under the
risk-neutral measure $\mathbb{Q}$ as the conditional expectation of the
discounted future option value.
In the present setting, the optimal value process satisfies the backward
recursion
\begin{align}
    \mathcal{V}_{n_T} &= P_{n_T}, \nonumber\\
    \mathcal{V}_k &= \max\big\{P_k,C(t_k, S_k)\big\}, 
    \qquad k = 0, \dots, n_T - 1, \label{eq:backward_recursion_intro}
\end{align}
where $P_k = e^{-rt_k}(K - S_k)^+$ denotes the immediate exercise payoff 
of an American put option at time $t_k$, discounted to $t_0$, for a given 
strike $K>0$. The \emph{continuation value} is defined as
\begin{equation} \label{eq:cont_value}
    C(t_k, S_k) = \mathbb{E}\!\left[\mathcal{V}_{k+1} \mid \mathcal{F}_k\right],
\end{equation}
that is, the expected 
optimal value at the next exercise date, conditional on the current information. 
Note that $\mathcal{V}_{k+1}$ already incorporates all future optimal stopping 
decisions, so that the continuation value accounts for the full remaining 
optionality of the contract.

In this work, we restrict our analysis to derivative payoffs belonging to the
space of square-integrable functions $L^2(\Omega, \mathcal{F}, \mathbb{Q})$.
This assumption ensures that all relevant expectations and conditional
expectations are well-defined and finite under the risk-neutral measure.
In practical terms, most financial payoffs satisfy this property, since their
values depend on asset prices or state variables that, although random, are
typically modeled with distributions possessing finite second moments.

\subsection{Modern approaches to option pricing}
Over the past decades, the valuation of American-style derivatives has evolved
from analytical and lattice-based techniques
\citep{brennanschwartz77,coxrossrubinstein79,baroneadesiwhaley87} to more
flexible simulation and data-driven methods
\citep{tilley93,carriere96,tsitsiklisvanroy01,LongstaffSchwartz01,glasserman04}.
These modern approaches aim to efficiently approximate the optimal stopping
policy that determines when it is most profitable to exercise the option,
especially in settings where closed-form solutions are not available.

Simulation-based approaches to option pricing typically rely on a backward
dynamic programming framework. A classical and widely adopted method in this
category is the Longstaff-Schwartz algorithm \citet{LongstaffSchwartz01}. This
approach estimates the continuation value through a regression-based
approximation of the conditional expectation of future discounted payoffs. The
procedure proceeds backward from maturity: a large number of sample paths of the
underlying process are simulated, and at each time step $t_k$, for all paths
where the option is in the money, the algorithm fits a regression model of the
discounted future cash flows on a set of basis functions
$\{\phi_j(S_k)\}_{j=1}^J$ of the current state variable $S_k$. Formally,
\[
\tilde{C}(t_k, S_k) = \sum_{j=1}^{J} \beta_{k,j} \, \phi_j(S_k),
\]
where the coefficients $\beta_{k,j}$ are obtained by minimizing the squared
error between the realized discounted payoffs and their regression estimates.
While the Longstaff-Schwartz algorithm is conceptually simple and
computationally efficient, its accuracy depends heavily on the choice of basis
functions. A poorly chosen function space can lead to either overfitting or
underfitting, and achieving stable results often requires a large number of
simulated paths. Moreover, the method provides only a parametric approximation
of the continuation value, which may not capture complex nonlinear dependencies
present in high-dimensional problems.

In recent years, advances in machine learning and deep learning have inspired
alternative methods for American option pricing
\citep{kohlerkrzyzaktodorovic10,LinAlmeida21,becker2019deep,becker2020pricing,
becker2021solving,hanjentzene18,sirignanospiliopoulos18,hurephamwarin20,
lapeyrelelong21}. These approaches typically recast the problem as the
approximation of conditional expectations or value functions within a flexible,
data-driven framework, often leading to improved performance in high-dimensional
settings. In particular, \citet{becker2019deep} and \citet{becker2020pricing}
parameterize stopping rules and hedging strategies directly by deep neural
networks, while \citet{hanjentzene18}, \citet{sirignanospiliopoulos18}
and \citet{hurephamwarin20} solve the underlying free-boundary or backward
stochastic differential equation problem in high dimensions.

In the following section, we introduce a kernel-based alternative for estimating
continuation values. This approach replaces the explicit specification of basis
functions with a data-driven representation in reproducing kernel Hilbert spaces,
and provides a flexible and theoretically grounded way to approximate 
conditional expectations.

\section{Conditional mean embedding}\label{sec:cme}

In this section, we introduce the concept of conditional mean embedding,
originally proposed by \citet{songetal09}. We begin with a few preliminary
definitions and notions to set the stage.

Consider two random variables, $X$ and $Y$, taking values in the separable and
complete metric spaces $\Xcal$ and $\Ycal$, respectively. These variables
follow a joint probability distribution $\mathbb{Q}$ on the product space
$\Zcal\isdef\Xcal\times\Ycal$, which is equipped with a product metric
$d_\Zcal$. A common choice is the squared product metric
$d_\Zcal^2 = d_\Xcal^2 + d_\Ycal^2$. This general setting ensures the existence
of conditional distributions \citep[Theorem 10.2]{dud_02} and guarantees the
weak convergence of the empirical distributions introduced below. In practical
applications, $\Xcal$ and $\Ycal$ are typically subsets of Euclidean spaces.

Given a sample $\{z_i = (x_i, y_i)\}_{i=1}^{n} \subset \Zcal$, we consider a
probability measure $\tilde{\mathbb{Q}}$ supported on the finite grid 
\[
    \Gcal \isdef \{(x_i, y_j) : i, j = 1, \dots, n\} \subset \Zcal,
\]
which serves as an approximation of $\mathbb{Q}$. The measure
$\tilde{\mathbb{Q}}$ allows for efficient computation of various functionals,
including conditional expectations such as
\begin{equation}\label{eq:cme}
    \tilde\E[ f(Y)|X=x],
\end{equation}
for a function $f$ defined on $\Ycal$. For notational simplicity, in what
follows we adopt this generic CME notation to represent conditional
expectations. The connection between this formulation and the notation
introduced in the option pricing framework is clarified in
Remark~\ref{rmk:cme_pricing}.

\begin{remark}[{Connection with option pricing notation}]\label{rmk:cme_pricing}
In the context of option pricing, the CME formulation \eqref{eq:cme} can be
directly related to the conditional expectation appearing in
Equation~\eqref{eq:cont_value}. Specifically, by focusing on a single exercise
date $t_k$, we can express the continuation value \eqref{eq:cont_value}
as a conditional expectation of a discounted value computed at the subsequent
time step. To simplify notation within the kernel-based framework, we consider
a generic pair of random variables $(X, Y)$, where
\[
X \equiv S_k, \qquad Y \equiv S_{k+1}.
\]
The function $f$ defined on $\Ycal$ is used to represent the quantity
\begin{equation*}
    f(Y) = \Vcal_{k+1},
\end{equation*}
where $\Vcal_{k}$ is defined by recursion in 
Equation~\eqref{eq:backward_recursion_intro}. Under this identification, the
conditional expectation in Equation~\eqref{eq:cont_value} can be rewritten in
the generic CME notation as in Eq.~\eqref{eq:cme} up to the choice of measure. 
Each sample pair $(x_i, y_i)$ corresponds to two consecutive realizations of
the asset price along the same simulated path, i.e.,
\[
x_i = S_k^{(i)}, \qquad y_i = S_{k+1}^{(i)}, \quad i = 1, \dots, n.
\]
Moreover, since the underlying asset process $S_k$ is Markovian, the filtration
$\mathcal{F}_{k}$ can be identified with the $\sigma$-algebra generated by 
$S_k$. Hence, conditioning on $\mathcal{F}_{k}$ reduces to conditioning on
$S_k = x$, allowing us to express the continuation value estimation problem
entirely in terms of the CME operator
$\tilde C(t_k, S_k) = \tilde\E[f(Y)\mid X = x]$.
\end{remark}

Before proceeding, we briefly recall the definition of a reproducing kernel
Hilbert space. Let $\Hcal_\Xcal$ be a Hilbert space of functions on
$\Xcal$. 
A function $k_\Xcal\colon \Xcal \times \Xcal \to \mathbb{R}$ is called a
reproducing kernel of $\Hcal_\Xcal$ if $k_\Xcal(x,\cdot) \in \Hcal_\Xcal$
for all $x \in \Xcal$, and
\[
    f(x) = \langle f, k_\Xcal(x,\cdot) \rangle_{\Hcal_\Xcal}, 
    \quad \text{for all } f \in \Hcal_\Xcal,\ x \in \Xcal.
\]
Any Hilbert space admitting such a kernel is called a 
reproducing kernel Hilbert space (RKHS). 
Equivalently, there exists a feature map $\Phi_X\colon \Xcal \to \Hcal_\Xcal$ 
such that
\[
    k_\Xcal(x,x') = \langle \Phi_X(x), \Phi_X(x') \rangle_{\Hcal_\Xcal}, 
    \quad x,x' \in \Xcal.
\]

We assume the function space of interest for our framework to be a tensor
product RKHS denoted by $\Hcal = \Hcal_\Xcal \otimes \Hcal_\Ycal$, where 
$\Hcal_\Xcal$ and $\Hcal_\Ycal$ are separable RKHS associated with 
$\Xcal$ and $\Ycal$, respectively. The corresponding reproducing kernels are
denoted by $k_\Xcal$ and $k_\Ycal$. Consequently, the reproducing kernel $k$ 
of $\Hcal$ satisfies 
\[
    k\big((x,y),(x',y')\big) = k_\Xcal(x,x') k_\Ycal(y,y'),
    \quad x,x'\in\Xcal,\ y,y'\in\Ycal.
\]
For convenience, we define the row vectors of canonical feature maps as
\begin{equation}\label{PhiXYdef}
    \begin{aligned}
        {\bs\Phi}_X(\cdot) &
        \isdef [k_\Xcal(x_1,\cdot), \dots, k_\Xcal(x_n,\cdot)],\\
        {\bs\Phi}_Y(\cdot) &
        \isdef [k_\Ycal(y_1,\cdot), \dots, k_\Ycal(y_n,\cdot)],
    \end{aligned}
\end{equation}
and the associated kernel matrices 
\[
    \bs K_X \isdef [{\bs\Phi}_X(x_i)]_{i=1}^n, 
    \quad \bs K_Y \isdef [{\bs\Phi}_Y(y_i)]_{i=1}^n.
\]
Having introduced the necessary notation and preliminary concepts, we are now
ready to present the CME. We use the tensor product RKHS
$\Hcal = \Hcal _\Xcal \otimes \Hcal _\Ycal$, in order to estimate the
conditional expectation operator \eqref{eq:cme} for any $x \in \Xcal$. The
operator $\mu _{Y|X=x} \in \Hcal _\Ycal$ acts as a linear functional on
$\Hcal _\Ycal$, satisfying
\begin{equation}\label{CMEeq1}
    \langle f, \mu _{Y|X=x} \rangle _{\Hcal _{\Ycal}} 
    = \int _{\Ycal} f \dd \mathbb{Q}_{Y|X=x},
    \quad\text{for all } f \in \Hcal _\Ycal.
\end{equation}
Given a sample $(x_i, y_i)$, $i = 1, \dots, n$, and assuming the existence of
an element $\mu_{Y\mid X=\cdot} \in \Hcal$ satisfying \eqref{CMEeq1} for all
$x \in \Xcal$, \citet{gruenewaelderetal12} establish that the optimal estimator
among functions $\mu \in \Hcal$, $\mu: \Xcal \to \Hcal _\Ycal$, is obtained by
solving
\begin{equation}\label{eq:traditional}
    \hat{\mu} _{Y|X=\cdot} \isdef \argmin _{\mu \in \Hcal} \bigg\{ 
    \frac{1}{n} \sum _{i=1}^n \| k_{\Ycal}(y_i,\cdot) 
  - \mu(x_i) \|^2 _{\Hcal _\Ycal} + \lambda \| \mu \|_{\Hcal}^2 \bigg\}.
\end{equation}
The minimizer of \eqref{eq:traditional} is known to take the bilinear form
\[\hat{\mu} _{Y|X=\cdot} = {\bs\Phi}_Y(\cdot) {\bs F} {\bs\Phi}_X(\cdot)^{\T},
\]
where ${\bs\Phi}_X(\cdot)$ and ${\bs\Phi}_Y(\cdot)$ are defined in 
\eqref{PhiXYdef}, as established by \citet{micchellipontil05}.

By substituting this form into \eqref{eq:traditional}, the minimizer has the
representation
\begin{equation}\label{eq:conditionaldistributionembedding}
    \bm{F} = (\bm{K}_X + n \lambda \bm{I}_n)^{-1}, 
    \quad \text{and } \quad \hat{\mu} _{Y|X=\cdot} 
    = {\bs\Phi}_Y(\cdot) (\bm{K}_X 
    + n \lambda \bm{I}_n)^{-1} {\bs\Phi}_X(\cdot)^{\T}.
\end{equation}
The corresponding estimator for the conditional expectation operator
\eqref{CMEeq1} is given by
\begin{equation}\label{CMEeq1hat}
    \langle f, \hat{\mu}_{Y|X=x} \rangle _{\Hcal _{\Ycal}} 
    = [f(y_1), \dots, f(y_n)] \bs F {\bs\Phi}_X^{\T}(x), 
    \quad\text{for all } f \in \Hcal _\Ycal.
\end{equation}
A key observation from \eqref{CMEeq1hat} is that the optimal CME does not
explicitly involve the kernel matrix $\bm{K}_Y$, instead, $\Hcal_\Ycal$ is only
used for function evaluations.

To better understand the structure of the minimizer
\eqref{eq:conditionaldistributionembedding}, it is instructive to rewrite the
optimization problem \eqref{eq:traditional} in terms of the coefficient matrix
$\bm F$. By multiplying the objective function by $n$ and neglecting terms that
do not depend on $\bm F$, we obtain the equivalent finite-dimensional problem
\begin{equation*}
    \argmin _{\bs F \in \R^{n\times n}} \ 
    \Rcal_\lambda^{\textnormal{CME}}(\bs F),
\end{equation*}
where the objective function is given by
\begin{equation}\label{eq:condembobj}
	\begin{aligned}
		\Rcal_\lambda^{\textnormal{CME}}(\bs F) &= 
		\sum_{i=1}^n \left\{ -2 \, \bm \Phi_Y(y_i) \bm F \bm \Phi_X^{\T}(x_i)
		+ \bm \Phi_X^{\T}(x_i) \bm F^{\T} \bm K_Y 
  \bm F \bm \Phi_X^{\T}(x_i) \right\} \\
		& \quad + n \lambda \, \text{trace} 
    \left( \bm F^{\T} \bm K_Y \bm F \bm K_X \right) \\
		&= -2 \, \text{trace} \left( \bm K_Y \bm F \bm K_X \right) 
		+ \text{trace} \left( \bm F^{\T} \bm K_Y \bm F \bm K_X \bm K_X \right) \\
		& \quad + n \lambda \, \text{trace} 
    \left( \bm F^{\T} \bm K_Y \bm F \bm K_X \right) \\
		&= -2 \, \text{trace} \left( \bm K_Y \bm F \bm K_X \right)
		+ \text{trace} \left( \bm F^{\T} \bm K_Y \bm F \bm K_X 
    \left( \bm K_X + n \lambda \bm I_n \right) \right).
	\end{aligned}
\end{equation}
From the first-order optimality condition with respect to the matrix $\bm F$,
we obtain
\[
-2 \bm K_Y \bm K_X + 2 \bm K_Y \bm F 
\left( \bm K_X + n \lambda \bm I_n \right) \bm K_X = \bm 0,
\]
which yields
$
\bm F = (\bm K_X + n \lambda \bm I_n)^{-1}
$,
as anticipated in Eq.~\eqref{eq:conditionaldistributionembedding}.

A major computational challenge arises when working with large datasets, as the
inversion of the matrix in \eqref{eq:conditionaldistributionembedding} becomes
computationally expensive, with cubic complexity in the sample size. 
This limitation motivates the development of scalable approximations of the
conditional mean embedding. We introduce here a low-rank approximation aimed at
alleviating the computational bottlenecks of the full-rank formulation. 
Building on the matrix representation derived above, we exploit structured
low-rank decompositions of the kernel matrices to obtain a reduced
representation of the conditional expectation operator. 

\subsection{Pivoted Cholesky and double-orthogonal basis}\label{sec:pivchol}
We employ an efficient low-rank representation of the kernel matrix,
considered before in \citet{FilipovicMultererSchneider25}. In particular, we
adopt an adaptive low-rank approach based on the
\emph{pivoted Cholesky decomposition} for the kernel matrices $\bs K_X$ and
$\bs K_Y$. The method constructs, in a single procedure, both a low-rank
Cholesky factorization of the kernel matrix and an associated
\emph{double-orthogonal basis transformation}. This approach allows for the
derivation of an approximation that is orthogonal with respect to both the 
RKHS $\Hcal$ and {the empirical $L^2$ space associated with the sampling 
measure $\tilde{\Q}$.}
The resulting transformation diagonalizes the quadratic terms in the objective
function, substantially simplifying the ensuing optimization problem.

The computational procedure follows the algorithmic structure detailed in
\citet[Algorithm~4.1]{FilipovicMultererSchneider25}. For the reader's
convenience, we recall it in Algorithm~\ref{algo:pivChol}.

\begin{algorithm}[htb]
\caption{Pivoted Cholesky decomposition}
\label{algo:pivChol}
\begin{flushleft}
\begin{tabular}{ll}
\textbf{input:}  & symmetric and positive semidefinite
matrix ${\boldsymbol K}\in\mathbb{R}^{N \times N}$, 
\(\varepsilon\geq0\)\\
\textbf{output:} & low-rank approximation
\({\boldsymbol K}\approx{\boldsymbol L}{\boldsymbol L}^\top\)\\
&and
biorthogonal basis \({\boldsymbol B}\)
such that \({\boldsymbol B}^\top{\boldsymbol L}={\boldsymbol I}_m\)
\end{tabular}
\end{flushleft}
\begin{algorithmic}[1]
\smallskip
\State Initialization: set $m\isdef 1$,
 ${\boldsymbol d}\isdef \operatorname{diag}({\boldsymbol K
})$, \({\boldsymbol L}\isdef[\,]\),
 \({\boldsymbol B}\isdef[\,],\)
$\operatorname{err}\isdef \|{\boldsymbol d}\|_{1}$
\State\textbf{while} \(\operatorname{err}>\varepsilon\)
\State\StateIndent determine
$\pi(m)\isdef \operatorname{arg}\max_{1\le i\le N} d_i$
\State\StateIndent compute \[
\boldsymbol\ell_m
\isdef \frac{1}{\sqrt{d_{\pi(m)}}}\Big({\boldsymbol K}-{\boldsymbol L}
{\boldsymbol L}^\top\Big)\boldsymbol e_{\pi(m)}\quad\
\text{and}\quad\boldsymbol b_m
\isdef \frac{1}{\sqrt{d_{\pi(m)}}}\Big({\boldsymbol I}-{\boldsymbol B}
{\boldsymbol L}^\top\Big)\boldsymbol e_{\pi(m)}
\]
\State\StateIndent set \({\boldsymbol L}
\isdef [{\boldsymbol L},{\boldsymbol\ell}_m], \quad {\boldsymbol B}
\isdef [{\boldsymbol B},{\boldsymbol b}_m]\)
\State\StateIndent set \({\boldsymbol d}
\isdef {\boldsymbol d}-{\boldsymbol\ell}_m\odot
\,{\boldsymbol\ell}_m\), where $\odot$ denotes the Hadamard product
\State\StateIndent set \(\operatorname{err}
\isdef\|{\boldsymbol d}\|_1\), \(m\isdef m+1\)
\end{algorithmic}
\end{algorithm}

For a given symmetric
positive semidefinite matrix $\bs K\in \R^{n\times n}$, the algorithm
iteratively constructs an incomplete Cholesky factorization
$\bs K \approx \bs L \bs L^{\T}$, where $\bs L \in \R^{n\times m}$, up to a
prescribed tolerance $\varepsilon$, while simultaneously determining a
biorthogonal basis transformation $\bs B \in \R^{n\times m}$.
The pivot is selected greedily as the entry corresponding to the largest
diagonal element of the current Schur complement, in line with the classical
strategy of \citet{HPS12}.
The theoretical validity of the algorithm is established by the following
result.

\begin{theorem}[{\citet[Theorem~4.1]{FilipovicMultererSchneider25}}]
\label{thmChol}
For any tolerance $\varepsilon > 0$, given $\bs K\in \R^{n\times n}$,
Algorithm~\ref{algo:pivChol} computes matrices
$\bm B, \bm L \in \R^{n\times m}$ with $m \leq \operatorname{rank}(\bs K)$, such
that $\bm K - \bm L \bm L^\T$ is positive semidefinite and satisfies,
\begin{align*}
\operatorname{trace}\big({\bs K}-{\bs L}{\bs L}^\T\big)&\leq\varepsilon,\\
\operatorname{Im}\bm B &= 
\operatorname{span}\{{\bs e}_{p_1},\dots,{\bs e}_{p_m}\}, \\
\bm B^\T \bm L &= {\bm I},\\
\bm K \bm B &= \bm L.
\end{align*}
\end{theorem}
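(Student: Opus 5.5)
The plan is to prove all four properties at once by induction on the iteration counter $m$ of Algorithm~\ref{algo:pivChol}. Write $\bs L_m=[\bs\ell_1,\dots,\bs\ell_m]$, $\bs B_m=[\bs b_1,\dots,\bs b_m]$, and the Schur complement $\bs S_m\isdef\bs K-\bs L_m\bs L_m^\T$, with $\bs S_0=\bs K$. The inductive hypothesis after step $m$ is:
\begin{enumerate}
\item[(a)] $\bs S_m$ is symmetric positive semidefinite and $\bs d=\operatorname{diag}(\bs S_m)$;
\item[(b)] $\operatorname{Im}\bs B_m\subseteq\operatorname{span}\{\bs e_{p_1},\dots,\bs e_{p_m}\}$, where $p_j=\pi(j)$;
\item[(c)] $\bs K\bs B_m=\bs L_m$;
\item[(d)] $\bs B_m^\T\bs L_m=\bs I_m$.
\end{enumerate}
Once these hold, the trace bound is simply the stopping criterion. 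Indeed, $\bs d\geq 0$ by (a), so $\|\bs d\|_1=\operatorname{trace}(\bs S_m)\le\varepsilon$ at termination.

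\emph{Step 1 (Schur complement and PSD).} Let $p=\pi(m+1)$ and $\delta=d_p$. If the loop continues, then $\operatorname{trace}\bs S_m>\varepsilon>0$, so the maximal diagonal entry satisfies $\delta>0$. We have $\bs\ell_{m+1}=\bs S_m\bs e_p/\sqrt{\delta}$, hence
\[
\bs S_{m+1}=\bs S_m-\frac{\bs S_m\bs e_p\bs e_p^\T\bs S_m}{\delta},
\]
which is the Schur complement of the pivot entry of a PSD matrix and is therefore PSD. The diagonal update in line 6 is exactly $\operatorname{diag}(\bs S_{m+1})$. Moreover, $\bs S_{m+1}\bs e_p=0$, so row and column $p$ vanish and the same pivot is never chosen twice.

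\emph{Step 2 (key identities).} By (c), $\bs L_m^\T=\bs B_m^\T\bs K$. Therefore
\[
\bs K\bs b_{m+1}=\frac{(\bs K-\bs K\bs B_m\bs L_m^\T)\bs e_p}{\sqrt\delta}=\frac{(\bs K-\bs L_m\bs L_m^\T)\bs e_p}{\sqrt\delta}=\bs\ell_{m+1},
\]
which gives (c) at step $m+1$. Property (b) is immediate, since $\bs b_{m+1}$ is a combination of $\bs e_p$ and the columns of $\bs B_m$.

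For (d), combine (c) with the symmetry of $\bs K$ to get $\bs B^\T\bs L=\bs B^\T\bs K\bs B$, a symmetric matrix, so only the new row needs checking. First,
\[
\bs b_{m+1}^\T\bs\ell_{m+1}=\frac{\bs e_p^\T(\bs I-\bs L_m\bs B_m^\T)\bs S_m\bs e_p}{\delta}=1,
\]
because $\bs B_m^\T\bs S_m=\bs B_m^\T\bs K-\bs B_m^\T\bs L_m\bs L_m^\T=\bs L_m^\T-\bs L_m^\T=0$ by (c) and (d). Second, $\bs B_m^\T\bs\ell_{m+1}=\bs B_m^\T\bs S_m\bs e_p/\sqrt\delta=0$ by the same identity. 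This closes the induction.

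\emph{Step 3 (rank and image).} Property (d) forces $\bs L$ to have full column rank $m$. Since $\bs L=\bs K\bs B$, its columns lie in $\operatorname{Im}\bs K$, so $m\le\operatorname{rank}\bs K$. The same identity shows that $\bs B$ has rank $m$. Because the pivots are distinct (Step 1), (b) holds with equality by dimension count.

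The main obstacle is the identity $\bs B_m^\T\bs S_m=0$, i.e.\ that the columns of the current Schur complement are $\bs K$-orthogonal to the previously built basis. Everything else reduces to it. I would establish it first, directly from (c) and (d) as above, taking care with the order of the inductive steps so that the argument is not circular.
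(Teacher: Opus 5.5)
\paragraph{Verdict.} Your proof is correct, and I cannot compare it with the paper's argument because the paper gives none: Theorem~\ref{thmChol} is quoted from \citet[Theorem~4.1]{FilipovicMultererSchneider25} and only used. The paper relies on $\bs K\bs B=\bs L$ and $\bs B^\T\bs L=\bs I$ for the double-orthogonality of $\bs Q=\bs B\bs V$, and on the trace bound in Theorem~\ref{thm:approx} and Proposition~\ref{prop:coeff_err}.

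Judged on its own, your induction is complete in substance and not circular. Property (a) and $\bs K\bs B_{m+1}=\bs L_{m+1}$ need only the hypotheses at step $m$. The biorthogonality at step $m+1$ then rests on $\bs B_m^\T\bs S_m=\bs 0$, which uses (c) and (d) at step $m$ only. The rank bound and the image equality follow correctly from $\bs B^\T\bs L=\bs I_m$, $\bs L=\bs K\bs B$ and the distinctness of the pivots.

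\paragraph{Points to make explicit.}
\begin{enumerate}
\item You use symmetry and positive semidefiniteness of $\bs K$ throughout: for $\bs B_m^\T\bs K=\bs L_m^\T$, for $\delta>0$, and for $\|\bs d\|_1=\trace(\bs S_m)$. The theorem as printed only says $\bs K\in\R^{n\times n}$. State that you take the algorithm's input hypothesis; the base case $m=0$, $\bs S_0=\bs K$, is then immediate.
\item The identity $\bs S_{m+1}\bs e_p=\bs 0$ only concerns the next step. To conclude that $p$ is never selected again, note that line~6 makes $\bs d$ entrywise nonincreasing while (a) keeps it nonnegative. Hence $d_p=0$ from then on, and an argmax over a vector with positive maximum never selects a zero entry. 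This also gives termination after at most $n$ iterations, which the word ``computes'' presupposes.
\end{enumerate}

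\paragraph{Optional simplifications.}
\begin{enumerate}
\item You do not need the symmetry detour. Directly from (d), $\bs b_{m+1}^\T\bs L_m=\bs e_p^\T(\bs L_m-\bs L_m\bs B_m^\T\bs L_m)/\sqrt{\delta}=\bs 0$.
\item The image equality can be read off without the dimension count. The columns of $\bs B_m$ vanish in row $p_{m+1}$ by (b) and distinctness, so the $p_{m+1}$-entry of $\bs b_{m+1}$ is $1/\sqrt{\delta}$. Therefore $\bs B$ restricted to the pivot rows is triangular with nonzero diagonal.
\end{enumerate}
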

The pivoted Cholesky factorization enables a significant reduction in
computational cost for the conditional mean embedding, given that the trace
of the Schur complements is reduced sufficiently fast.
In particular, it reduces the overall cost to $\Ocal(n m^2)$, the memory 
requirements to $\Ocal(nm)$, and the evaluation cost to $\Ocal(nm)$, while never
requiring explicit assembly of the full kernel matrix. Only pivot columns and
diagonal elements are computed, ensuring scalability even for large datasets.

Based on the biorthogonal transformation $\bs B$, an additional rotation can be
introduced to diagonalize the quadratic terms of the regularized objective function. 
This is achieved by performing a spectral decomposition 
\[
\bs V \bs \Lambda \bs V^{\T} = \bs L^{\T} \bs L,
\]
with $\bs V, \bs \Lambda \in \R^{m\times m}$, incurring a computational cost of
order $\Ocal(m^3)$. 
Defining the transformed basis as
\begin{equation}\label{eq:spectralQ}
    \bs Q \isdef \bs B \bs V,
\end{equation}
we obtain a double-orthogonal representation that simplifies subsequent 
optimization steps. Since there holds 
\[
  \big\langle({\bs\Phi}{\bs Q})^\top,{\bs\Phi}{\bs Q}\big\rangle_\Hcal
  ={\bs V}^\top{\bs B}^\top{\bs K}{\bs B}{\bs V}={\bs I}
\]
as well as
\[
  \big\langle({\bs\Phi}{\bs Q})^\top,{\bs\Phi}{\bs Q}
  \big\rangle_{L^2_{\widehat{\mathbb{P}}}}
  ={\bs V}^\top{\bs B}^\top{\bs K}^2{\bs B}{\bs V}={\bs \Lambda},
\]
where \(\widehat{\mathbb{P}}\) is the sample measure associated to the
data sites in the canonical feature vector ${\bs\Phi}$, the obtained basis can
be considered a discretized version of the kernels spectral basis.

\subsection{Low-rank approximation of CME}
\label{sec:conditionaldistemblowrank}
We now apply the low-rank construction introduced in Section~\ref{sec:pivchol}
to the conditional mean embedding framework. In particular, we consider low-rank
approximations of the kernel matrices $\bs K_X$ and $\bs K_Y$ based on the
pivoted Cholesky decomposition and the associated double-orthogonal basis
transformation.

Applying pivoted Cholesky and the transformation \eqref{eq:spectralQ}
independently to $\bs K_X$ and $\bs K_Y$ yields the decompositions
\[
\bs K_X \approx \bs L_X \bs L_X^\T, \qquad 
\bs K_Y \approx \bs L_Y \bs L_Y^\T,
\]
together with the spectral decompositions
\[
\bs V_X \bs \Lambda_X \bs V_X^\T = \bs L_X^\T \bs L_X, \qquad
\bs V_Y \bs \Lambda_Y \bs V_Y^\T = \bs L_Y^\T \bs L_Y,
\]
and the corresponding basis matrices
\[
\R^{n \times m_X} \ni \bs Q_X = \bs B_X \bs V_X, \qquad 
\R^{n \times m_Y} \ni \bs Q_Y = \bs B_Y \bs V_Y,
\]
which define reduced representations of the feature spaces associated with 
$X$ and $Y$, respectively.
These constructions allow us to reformulate the CME estimation problem in a
lower-dimensional subspace, avoiding operations involving the full kernel
matrices. To this end, we exploit the matrix formulation of the objective
function introduced in \eqref{eq:condembobj} and seek a reduced representation
of the coefficient matrix $\bm F$ that is compatible with the low-rank
structure. In particular, we approximate $\bm F$ by restricting it to the
subspace spanned by the low-rank bases associated with $X$ and $Y$. 
This leads to a parametrization of the form

\begin{equation*}
	\bm F = \bm Q_Y \tilde{\bm F}  \bm Q_X^{\T}.
\end{equation*}
We insert this into the objective function \eqref{eq:condembobj},
with $\tilde{\bm F} \in \mathbb{R}^{m_Y \times m_X}$, and we obtain the
low-rank minimization problem on $\mathbb{R}^{m_Y \times m_X}$ according to

\begin{equation*}
    \argmin _{\tilde{\bm F} \in \R^{m_Y \times m_X}} 
    \ \tilde{\Rcal}_\lambda^{\textnormal{CME}}(\tilde{\bm F}) ,
\end{equation*}
where

\begin{equation}\label{eq:lowrankprog}
	\begin{aligned}
		\tilde{\Rcal}_\lambda^{\textnormal{CME}}(\tilde{\bm F}) &= 
		\Rcal_\lambda^{\textnormal{CME}}(\bm Q_Y \tilde{\bm F}  \bm Q_X^{\T}) \\
		&= -2 \, \text{trace} \left( \bm L_Y \bm V_Y \tilde{\bm F} \bm V_X^\T 
    \bm L_X^{\T} \right)+ \text{trace} \left( \tilde{\bm F} 
  \bm \Lambda_X \tilde{\bm F}^\T 
+ n \lambda \tilde{\bm F} \tilde{\bm F}^\T \right).
	\end{aligned}
\end{equation}
The reduced objective function \eqref{eq:lowrankprog} is quadratic in
$\tilde{\bm F}$ and admits a closed-form solution. By computing the first-order
optimality condition with respect to $\tilde{\bm F}$, we obtain

\begin{equation}\label{eq:Ftilde}
    \tilde{\bm F} = \left( \bm L_Y \bm V_{Y} \right)^\T
    \left( \bm L_X \bm V_{X} \right) \left( \bm \Lambda_X 
    + n \lambda \bm I_{m_X} \right)^{-1},
\end{equation}
and

\begin{equation}\label{eq:sol_lowrank}
    \tilde{\mu} _{Y|X=\cdot} = {\bs\Phi}_Y(\cdot) \ 
    \bs Q_Y \tilde{\bm F} \bs Q_X^\T \  {\bs\Phi}_X(\cdot)^{\T}.
\end{equation}
Which is an approximation of the CME in terms of the discrete spectral bases
associated to \(k_\Xcal\) and \(k_\Ycal\).
For all $f \in \Hcal _\Ycal$, the corresponding estimator for the conditional 
expectation operator \eqref{CMEeq1} is given by
\begin{equation}\label{eq:sol_lowrank2}
    \langle f, \tilde{\mu}_{Y|X=x} \rangle _{\Hcal _{\Ycal}} 
    =  [f(y_1), \dots, f(y_n)] \bs Q_Y \tilde{\bm F} 
    \bm Q_X^\T {\bs\Phi}_X^{\T}(x).
\end{equation}
The computational cost to obtain $\tilde{\bm F} \in \mathbb{R}^{m_Y \times m_X}$
is $\mathcal{O}(n \, m_X \,  m_Y)$. Afterward, due to the sparsity of 
$\bm B_X \in \mathbb{R}^{n \times m_X}$ and 
$\bm B_Y \in \mathbb{R}^{n \times m_Y}$, the evaluation of the conditional mean
embedding can be performed at a cost of $\mathcal{O}(m_X \, m_Y)$.
Observe that, in contrast to the full-rank formulation in \eqref{CMEeq1hat},
where the kernel matrix $\bm{K}_Y$ does not appear explicitly and
$\mathcal{H}_\Ycal$ is only required for function evaluations, the low-rank
approximation introduces an explicit dependence on the finite-dimensional
structure of $\mathcal{H}_\Ycal$.

A concise comparison between the full-rank and low-rank formulations of
conditional mean embedding is reported in Table~\ref{tab:cme_summary},
highlighting the main differences in objective functions, solutions and
predictive forms.

\begin{table}[ht]
\centering
\renewcommand{\arraystretch}{2}
\begin{tabular}{| >{\centering\arraybackslash}m{7cm} | 
  >{\centering\arraybackslash}m{7cm} |}
\hline
\textbf{Full-rank CME} & \textbf{Low-rank CME} \\
\hline
$\argmin_{\bm F \in \mathbb{R}^{n \times n}} \ 
\Rcal_\lambda^{\textnormal{CME}}(\bm F)$
& $ \argmin_{\tilde{\bm F} \in \mathbb{R}^{m_Y \times m_X}} \ 
\tilde{\Rcal}_\lambda^{\textnormal{CME}}(\tilde{\bm F})$ \\
\hline
$\displaystyle \Rcal_\lambda^{\textnormal{CME}}(\bm F) = -2\, 
\text{tr}(\bm K_Y \bm F \bm K_X) + \text{tr}(\bm F^\T \bm K_Y 
\bm F \bm K_X (\bm K_X + n \lambda \bm I_n))$
& $\displaystyle \tilde{\Rcal}_\lambda^{\textnormal{CME}}(\tilde{\bm F}) = -2\, 
\text{tr}(\bm L_Y \bm V_Y \tilde{\bm F} \bm V_X^\T \bm L_X^\T) 
+ \text{tr}(\tilde{\bm F} \bm \Lambda_X \tilde{\bm F}^\T 
+ n \lambda \tilde{\bm F} \tilde{\bm F}^\T)$ \\
\hline
$\displaystyle \hat{\mu}_{Y|X=\cdot} 
= {\bs\Phi}_Y(\cdot) \bm F {\bs\Phi}_X(\cdot)^\T$
& $\displaystyle \tilde{\mu}_{Y|X=\cdot} = {\bs\Phi}_Y(\cdot) 
\bs Q_Y \tilde{\bm F} \bs Q_X^\T {\bs\Phi}_X(\cdot)^\T$ \\
\hline
$\displaystyle \bm F = (\bm K_X + n \lambda \bm I_n)^{-1}$
& $\displaystyle \tilde{\bm F} = (\bm L_Y \bm V_Y)^\T (\bm L_X \bm V_X) 
(\bm \Lambda_X + n \lambda \bm I_{m_X})^{-1}$ \\
\hline
$\displaystyle \langle f, \hat{\mu}_{Y|X=x} \rangle_{\Hcal_\Ycal} 
= [f(y_1), \dots, f(y_n)] \bm F {\bs\Phi}_X^\T(x)$
& $\displaystyle \langle f, \tilde{\mu}_{Y|X=x} \rangle_{\Hcal_\Ycal} 
= [f(y_1), \dots, f(y_n)] \bs Q_Y \tilde{\bm F} \bs Q_X^\T {\bs\Phi}_X^\T(x)$ \\
\hline
\end{tabular}
\caption{Comparison of full-rank and low-rank CME formulations.}
\label{tab:cme_summary}
\end{table}

\section{Convergence rate for low-rank CME}\label{sec:conv_rate}
In this section, we study the convergence properties of the proposed low-rank 
conditional mean embedding.
By the triangle inequality, the total error admits the decomposition
\begin{equation}\label{eq:tot_err}
\|\mu - \tilde{\mu}\|_{\Hcal} \leq 
\underbrace{\|\mu - \hat{\mu}\|_{\Hcal}}_{\text{statistical error}} 
+ 
\underbrace{\|\hat{\mu} - \tilde{\mu}\|_{\Hcal}}_{\text{low-rank approximation error}},
\end{equation}
where $\mu$ is the CME, $\hat{\mu} \isdef \hat{\mu}_{Y|X=\cdot}$ 
the empirical estimator defined by \eqref{eq:conditionaldistributionembedding}, 
and $\tilde{\mu} \isdef \tilde{\mu}_{Y|X=\cdot}$ the low-rank approximation 
given in \eqref{eq:sol_lowrank}.
In this paper, we focus on the second term, the low-rank approximation error, 
which quantifies the additional error introduced by replacing the full-rank 
estimator $\hat{\mu}$ with its low-rank counterpart $\tilde{\mu}$. 
In order to analyze this error, we now characterize 
how the empirical estimator $\hat{\mu}$ can be projected onto the reduced 
space $\tilde{\Hcal}$, defined as
\begin{equation*}
		\tilde\Hcal \isdef  \spn \big\{ \left( \bs\Phi_{X} \bm Q_X  \right)_i 
      \otimes \left( \bs\Phi_{Y} \bm Q_Y  \right)_j: 
		i=1,\dots,m_X,\, j=1,\dots,m_Y\big\},
\end{equation*}
and how this projection relates to the actual low-rank solution $\tilde{\mu}$.
This leads to the following result.

\begin{theorem}[{Low-rank approximation error}]\label{thm:approx}
    Let \(\hat{\mu} \) be the empirical estimator defined by 
    \eqref{eq:conditionaldistributionembedding}, and \(\tilde{\mu} \) 
    the low-rank approximation given in \eqref{eq:sol_lowrank}, computed 
    under the low-rank approximation conditions 
	\(\trace\big({\bs K}_X-{\bs L}_X{\bs L}_X^\T\big)<\varepsilon\) and 
	\(\trace\big({\bs K}_Y-{\bs L}_Y{\bs L}_Y^\T\big)<\varepsilon\). 
  Then the orthogonal projection of $\hat \mu$ onto $\tilde\Hcal$ is given by
	\[
	\hat \mu_{\tilde\Hcal}= \bs\Phi_Y (\cdot) \bm Q_Y  \bm F_{\tilde\Hcal} 
	\bm Q_X^\T \bs\Phi _X (\cdot)^{\T},
	\]
	with coefficient matrix
	\begin{equation}\label{eq:orthoproj}
		\bm F_{\tilde\Hcal} = \argmin_{\bs F'\in 
			\R ^{m_Y\times m_X}} {\big\| \hat \mu -{\bs\Phi}_Y(\cdot) \bm Q_Y  \bs F' 
			 	\bm Q_{X}^\T  {\bs\Phi}_X(\cdot)^\T}\big\|_{\Hcal}^2
        =(\bm L_Y \bm V_Y)^\T \bm F\bm L_{X}\bm V_{X}.
	\end{equation}
	Moreover, the following inequality holds,
	\begin{equation}\label{eq:err_lr}
        {\| \hat \mu -\tilde \mu \|^2_{\Hcal}}
	\leq \varepsilon  \, \|{\bs F}\|_F^2 \big(\trace({\bs K}_X)
  +\trace({\bs K}_Y)\big) 
	+{\| \bm F_{\tilde\Hcal}-\tilde{\bm F} \|_F^2},
	\end{equation}
    where \(\|\cdot\|_F\) denotes the Frobenius norm.
\end{theorem}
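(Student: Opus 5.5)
The plan is to work in coordinates of the orthonormal system of $\tilde\Hcal$. By the computation following \eqref{eq:spectralQ}, the functions $\bs\Phi_X\bs Q_X$ are $\Hcal_\Xcal$-orthonormal, because $\bs Q_X^\T\bs K_X\bs Q_X=\bs V_X^\T\bs B_X^\T\bs L_X\bs V_X=\bs I$. This uses $\bs K_X\bs B_X=\bs L_X$ and $\bs B_X^\T\bs L_X=\bs I$ from Theorem~\ref{thmChol}. The same holds for $\bs\Phi_Y\bs Q_Y$. The tensor products $(\bs\Phi_X\bs Q_X)_i\otimes(\bs\Phi_Y\bs Q_Y)_j$ are therefore orthonormal in $\Hcal$, and the inner product of two bilinear elements $\bs\Phi_Y\bs A\bs\Phi_X^\T$ and $\bs\Phi_Y\bs A'\bs\Phi_X^\T$ equals $\trace(\bs A^\T\bs K_Y\bs A'\bs K_X)$.

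\textbf{The projection formula.} The coefficient of $\hat\mu$ along the $(j,i)$ basis element is
\[
\big(\bs Q_Y^\T\bs K_Y\bs F\bs K_X\bs Q_X\big)_{ji}
=\big((\bs L_Y\bs V_Y)^\T\bs F\,\bs L_X\bs V_X\big)_{ji},
\]
again using $\bs K\bs B=\bs L$. This gives \eqref{eq:orthoproj}. The minimization problem is solved by this coefficient matrix because the basis is orthonormal.

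\textbf{The error bound.} By Pythagoras,
\[
\|\hat\mu-\tilde\mu\|_\Hcal^2=\|\hat\mu-\hat\mu_{\tilde\Hcal}\|_\Hcal^2+\|\hat\mu_{\tilde\Hcal}-\tilde\mu\|_\Hcal^2,
\]
since $\tilde\mu\in\tilde\Hcal$. Orthonormality identifies the second term with $\|\bm F_{\tilde\Hcal}-\tilde{\bm F}\|_F^2$. For the first term, let $\bs P_X\isdef\bs\Phi_X\bs Q_X\bs Q_X^\T\bs\Phi_X^\T$ denote the orthogonal projector in $\Hcal_\Xcal$ onto $\spn(\bs\Phi_X\bs Q_X)$, and define $\bs P_Y$ analogously. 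The projector onto $\tilde\Hcal$ is $\bs P_Y\otimes\bs P_X$. Writing $\bs I-\bs P_Y\otimes\bs P_X=(\bs I-\bs P_Y)\otimes\bs I+\bs P_Y\otimes(\bs I-\bs P_X)$, whose two summands are orthogonal, yields
\[
\|\hat\mu-\hat\mu_{\tilde\Hcal}\|^2\le\|((\bs I-\bs P_Y)\otimes\bs I)\hat\mu\|^2+\|(\bs I\otimes(\bs I-\bs P_X))\hat\mu\|^2 .
\]
Expand $\hat\mu=\sum_{i,j}F_{ji}\,k_\Ycal(y_j,\cdot)\otimes k_\Xcal(x_i,\cdot)$ and apply Cauchy--Schwarz over the index pairs. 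This gives
\[
\|(\bs I\otimes(\bs I-\bs P_X))\hat\mu\|^2\le\|\bs F\|_F^2\sum_{i,j}\|k_\Ycal(y_j,\cdot)\|^2\,\|(\bs I-\bs P_X)k_\Xcal(x_i,\cdot)\|^2
=\|\bs F\|_F^2\,\trace(\bs K_Y)\sum_i\|(\bs I-\bs P_X)k_\Xcal(x_i,\cdot)\|^2 .
\]
It remains to identify $\sum_i\|(\bs I-\bs P_X)k_\Xcal(x_i,\cdot)\|^2=\trace(\bs K_X-\bs K_X\bs Q_X\bs Q_X^\T\bs K_X)$. Since $\bs K_X\bs Q_X\bs Q_X^\T\bs K_X=\bs L_X\bs V_X\bs V_X^\T\bs L_X^\T=\bs L_X\bs L_X^\T$, this trace equals $\trace(\bs K_X-\bs L_X\bs L_X^\T)<\varepsilon$. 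The symmetric computation for $Y$ gives $\|\bs F\|_F^2\trace(\bs K_X)\,\varepsilon$. Adding the two contributions produces exactly \eqref{eq:err_lr}.

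The main obstacle is the first Pythagorean term, that is, splitting the complement projector of the tensor space correctly and choosing the Cauchy--Schwarz step so that the constants come out as $\trace(\bs K_X)+\trace(\bs K_Y)$. The remaining steps are bookkeeping with $\bs K\bs B=\bs L$ and $\bs B^\T\bs L=\bs I$.
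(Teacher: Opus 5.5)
Your proof is correct and follows the same route as the paper's. Orthonormality of the double-orthogonal basis gives the projection coefficients $(\bm L_Y\bm V_Y)^\T\bm F\bm L_X\bm V_X$ and the identity $\|\hat\mu_{\tilde\Hcal}-\tilde\mu\|_\Hcal=\|\bm F_{\tilde\Hcal}-\tilde{\bm F}\|_F$; Pythagoras plus a split of the residual into an $X$-truncation part and a $Y$-truncation part, each bounded by Cauchy--Schwarz, then gives \eqref{eq:err_lr}. The differences are presentational: your split of $\bs I-\bs P_Y\otimes\bs P_X$ is the operator form of the paper's add-and-subtract step on $\trace(\bm F^\T\bm K_Y\bm F\bm K_X)-\trace(\bm F^\T\bm L_Y\bm L_Y^\T\bm F\bm L_X\bm L_X^\T)$, and your identity $\sum_i\|(\bs I-\bs P_X)k_\Xcal(x_i,\cdot)\|^2=\trace(\bm K_X-\bm L_X\bm L_X^\T)$ takes the place of the paper's bound $\|\bm A\|_F\le\trace\bm A$.
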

Refer to Appendix~\ref{app:proof_lowrank} for a detailed proof of 
Theorem~\ref{thm:approx}.
Theorem~\ref{thm:approx} highlights the role of the subspace $\tilde\Hcal$ in
controlling the low-rank approximation error. It is therefore natural to further
investigate the behavior of the associated coefficient matrices, which are
summarized in Table~\ref{tab:coeff-matrices}. To this end, 
we note that
\begin{equation*}
    \tilde{\bm F} = (\bm L_Y \bm V_Y)^\T (\bm L_X \bm V_X)
(\bm \Lambda_X + n \lambda \bm I_{m_X})^{-1}=
(\bm L_Y \bm V_Y)^\top 
    (\bm L_X \bm L_X^\top + n\lambda \bm I_n)^{-1} \bm L_X \bm V_X.
\end{equation*}

\begin{table}[ht]
\centering
\renewcommand{\arraystretch}{1.4}
\begin{tabular}{|l|l|}
\hline
{Empirical estimator $\hat{\mu} \in \Hcal$} 
& $\displaystyle \hat{\mu} = {\bs\Phi}_Y(\cdot) \bm F {\bs\Phi}_X(\cdot)^\T$ \\
& $\displaystyle \bm F = (\bm K_X + n \lambda \bm I_n)^{-1}$ \\ 
\hline

{Low-rank approximation $\tilde{\mu} \in \tilde\Hcal$} 
& $\displaystyle \tilde{\mu} = {\bs\Phi}_Y(\cdot)  \bm Q_Y \tilde{\bm F} 
\bm Q_X^\T {\bs\Phi}_X(\cdot)^\T$ \\ 
& $\displaystyle \tilde{\bm F} = (\bm L_Y \bm V_Y)^\T (\bm L_X \bm V_X)
(\bm \Lambda_X + n \lambda \bm I_{m_X})^{-1}$ \\ 
\hline

{Orthogonal projection $\hat{\mu}_{\tilde\Hcal}$ of $\hat \mu$ in $\tilde\Hcal$} 
& $\displaystyle \hat \mu_{\tilde\Hcal} = \bs\Phi_Y (\cdot) \bm Q_Y 
\bm F_{\tilde\Hcal} \bm Q_X^\T \bs\Phi_X (\cdot)^\T$ \\ 
& $\displaystyle \bm F_{\tilde\Hcal} = (\bm L_Y \bm V_Y)^\T 
\bm F \bm L_X \bm V_X$ \\ 
\hline
\end{tabular}
\caption{Summary of the different coefficient matrices used in the analysis.}
\label{tab:coeff-matrices}
\end{table}

The latter representation is particularly convenient 
because it depends directly on the inverse of the regularized 
low-rank matrix $(\bm L_X \bm L_X^\top + n\lambda \bm I_n)$, which simplifies
the estimate of the last term in Equation~\eqref{eq:err_lr},  i.e. 
$\| \bm F_{\tilde\Hcal} - \tilde{\bm F} \|_F^2$,
leading to the following result.

\begin{proposition}[Estimation of the coefficient error]
\label{prop:coeff_err}
Let $\bm F_{\tilde\Hcal}$ denote the orthogonal projection of the empirical
coefficient matrix onto the low-rank subspace $\tilde{\Hcal}$, and let
$\tilde{\bm F}$ be the corresponding low-rank solution. Given $\lambda >0$,
the Frobenius norm of the difference between these coefficient matrices
satisfies
\[
\big\| \bm F_{\tilde\Hcal} - \tilde{\bm F}\big\|_F^2
\le
\frac{\varepsilon^2}{{(n\lambda)^4}}\operatorname{trace}(\bm K_X)
\operatorname{trace}(\bm K_Y).
\]
\end{proposition}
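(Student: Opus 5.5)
We write both coefficient matrices with a common outer structure and use a resolvent identity. By Table~\ref{tab:coeff-matrices} and the identity stated just before the proposition,
\[
\bm F_{\tilde\Hcal}-\tilde{\bm F}
=(\bm L_Y\bm V_Y)^\T\Big[(\bm K_X+n\lambda\bm I_n)^{-1}-(\bm L_X\bm L_X^\T+n\lambda\bm I_n)^{-1}\Big]\bm L_X\bm V_X .
\]
Set $\bm A\isdef\bm K_X+n\lambda\bm I_n$ and $\tilde{\bm A}\isdef\bm L_X\bm L_X^\T+n\lambda\bm I_n$. The second resolvent identity gives $\bm A^{-1}-\tilde{\bm A}^{-1}=-\bm A^{-1}(\bm K_X-\bm L_X\bm L_X^\T)\tilde{\bm A}^{-1}$. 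Both inverses are symmetric positive definite with spectral norm at most $(n\lambda)^{-1}$, since $\bm K_X$ and $\bm L_X\bm L_X^\T$ are positive semidefinite.

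Next we bound the Frobenius norm with the submultiplicative inequality $\|\bm M\bm N\|_F\le\|\bm M\|_2\|\bm N\|_F$, splitting the terms as follows.
\begin{itemize}
\item The outer factors $\bm V_Y,\bm V_X$ are orthogonal and drop out.
\item For $\bm L_Y$ we use $\|\bm L_Y\|_2\le\|\bm L_Y\|_F=\trace(\bm L_Y^\T\bm L_Y)^{1/2}=\trace(\bm L_Y\bm L_Y^\T)^{1/2}\le\trace(\bm K_Y)^{1/2}$. The last step holds because $\bm K_Y-\bm L_Y\bm L_Y^\T$ is positive semidefinite by Theorem~\ref{thmChol}.
\item In the same way, $\|\bm L_X\|_2\le\trace(\bm K_X)^{1/2}$.
\item The Schur complement $\bm S_X\isdef\bm K_X-\bm L_X\bm L_X^\T$ is positive semidefinite, so $\|\bm S_X\|_F\le\trace(\bm S_X)<\varepsilon$.
\end{itemize}
Collecting these bounds,
\[
\|\bm F_{\tilde\Hcal}-\tilde{\bm F}\|_F\le \|\bm L_Y\|_2\,\|\bm A^{-1}\|_2\,\|\bm S_X\|_F\,\|\tilde{\bm A}^{-1}\|_2\,\|\bm L_X\|_2
\le \frac{\varepsilon}{(n\lambda)^2}\trace(\bm K_Y)^{1/2}\trace(\bm K_X)^{1/2}.
\]
Squaring this gives exactly the claimed inequality.

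The only delicate point is the bookkeeping. We must confirm the identity $\tilde{\bm F}=(\bm L_Y\bm V_Y)^\T\tilde{\bm A}^{-1}\bm L_X\bm V_X$, which is stated in the paper and follows from $\bm L_X^\T\bm L_X=\bm V_X\bm\Lambda_X\bm V_X^\T$ together with the push-through identity $\tilde{\bm A}^{-1}\bm L_X=\bm L_X(\bm L_X^\T\bm L_X+n\lambda\bm I)^{-1}$. We must also check that no factor of $\bm L_X$ is needed to offset the inverse, since crude spectral bounds suffice and yield precisely the power $(n\lambda)^{-4}$ after squaring.

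A sharper bound, for example using $\|\tilde{\bm A}^{-1}\bm L_X\|_2\le (2\sqrt{n\lambda})^{-1}$, is possible but is not needed for the stated result.
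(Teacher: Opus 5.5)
Your proposal is correct and follows essentially the same route as the paper. You rewrite $\tilde{\bm F}$ through the regularized inverse $(\bm L_X\bm L_X^\T+n\lambda\bm I_n)^{-1}$ (the paper verifies this with the Woodbury identity where you use the push-through identity), apply the resolvent identity with the positive semidefinite Schur complement, bound both inverses by $(n\lambda)^{-1}$ in spectral norm, and bound the outer factors by $\trace(\bm K_X)^{1/2}$ and $\trace(\bm K_Y)^{1/2}$; the only cosmetic difference is that you use spectral norms on $\bm L_Y\bm V_Y$ and $\bm L_X\bm V_X$ where the paper uses Frobenius norms, and since you then relax these to the same trace bounds, the final estimate is identical.
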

Refer to Appendix~\ref{app:proof_coeff} for a detailed proof of 
Proposition \ref{prop:coeff_err}.

Combining Theorem~\ref{thm:approx} and Proposition~\ref{prop:coeff_err}, 
we obtain an explicit bound on the low-rank approximation error:
\begin{equation}\label{eq:delta_LR}
\|\hat{\mu} - \tilde{\mu}\|^2_{\Hcal}
\,\leq\,
\delta^{LR}
\end{equation}
with
\begin{equation*}
\delta^{LR} = \varepsilon \, \|\bm F\|_F^2 \bigl(\operatorname{trace}(\bm K_X) 
+ \operatorname{trace}(\bm K_Y)\bigr)
+
\frac{\varepsilon^2}{(n\lambda)^4} \,
\operatorname{trace}(\bm K_X)\operatorname{trace}(\bm K_Y).
\end{equation*}

Turning to the statistical error term in \eqref{eq:tot_err},
deriving such a bound is a well-studied problem in the literature, and several
convergence results have been established under various assumptions on the
regularity of the CME and the kernel. In this work, we rely on the sharp rates
derived in \citet{LiMeunier_etal2024}. In particular, this term satisfies, up to
logarithmic factors and with probability at least $1 - 5e^{-\tau}$ that
\begin{equation}\label{eq:stat_err_Li}
\|\mu - \hat{\mu}\|^2_\gamma \leq \delta^{S}_\gamma ,
\end{equation}
with
\begin{equation*}
\delta^{S}_\gamma=  \tau^2 c_1 \, 
n^{-\frac{\beta - \gamma}{\max(1,\, \beta+p)}}.
\end{equation*}
Here $\gamma \in [0,1]$ is a norm interpolation parameter: $\gamma = 0$
corresponds to the $L^2$ norm, while $\gamma = 1$ corresponds to the 
RKHS norm. The constant $c_1 > 0$ is independent of $n$ and $\tau > \log(5)$
is a confidence parameter. The parameter $\beta > 0$ controls the smoothness of
the true CME $\mu$: the case $\beta \geq 1$ corresponds to the well-specified
setting where $\mu \in \Hcal$, while $\beta < 1$ corresponds to the misspecified
setting where $\mu$ lies in a larger interpolation space. 
The parameter $p \in (0,1]$ governs the eigenvalue decay of the integral
operator associated with the kernel $k_\Xcal$: smaller $p$ corresponds to faster
decay and a simpler approximation problem.
A detailed discussion of the relevant literature, the required assumptions, 
and a precise statement of the bound are provided in
Appendix~\ref{app:stat_error}.

\section{Offline training and pricing backward error}
\label{sec:offline_cme}
We now aim to derive an upper bound for the error arising during the backward
recursion of the optimal stopping problem, where at each time step the
continuation value is approximated through the CME. In contrast to the previous
section, where we analyzed the CME approximation error at a fixed time step,
we now fix the sample path and investigate how this error evolves backward in
time throughout the pricing procedure.

A similar analysis for the Longstaff-Schwartz method has been carried out by
Zanger in a series of papers 
\citep{Zanger2009123,Zanger2013503,Zanger2018447,Zanger2020923}, in which the
backward error propagation is investigated under increasingly general
assumptions on the approximating class.
A fundamental distinction between the proposed CME-based approach and the
classical Longstaff-Schwartz method concerns the temporal structure of the
approximation problem arising in the backward recursion.
In the Longstaff--Schwartz framework, the continuation value at time $t_k$ is
approximated by solving, at each backward step, a regression function of the
form $ \sum_{j=1}^{J} \beta_{k,j} \, \phi_j(S_k)$,
where the regression target depends on the optimal stopping decisions at all
future times.
As a consequence, the regression coefficients $\beta_{k,j}$ are inherently
time-dependent and must be recomputed at every exercise date during the
backward induction.

In contrast, the CME-based approach estimates a conditional expectation operator
\begin{equation}\label{eq:cme_op}
    f \mapsto \mathbb{E}[ \, f(S_{k+1}) \mid S_k = \cdot \, ],
\end{equation}
which acts linearly on functions $f$ defined on the state space.
Under the standing assumption that the underlying process
$(S_k)_{k=0,\dots,{n_T}}$ is Markovian and time-homogeneous, the conditional
distribution of $S_{k+1}$ given $S_k$ is independent of the time index $k$.
Consequently, the operator \eqref{eq:cme_op} does not depend on the exercise 
date. This observation has important computational and conceptual
implications. Once a CME approximation \eqref{eq:cme_op} of the conditional
expectation operator has been constructed from simulated sample pairs
$(S_k,S_{k+1})$, it can be reused across all backward steps of the dynamic
programming recursion. At each time step, the continuation value is obtained by
applying the same operator \eqref{eq:cme_op} to a different function $f$,
corresponding to the (possibly approximated) value function at the next time
step. In particular, the CME-based approach allows for an
\emph{offline training} phase in which the conditional expectation operator is
estimated once, followed by an \emph{online evaluation} phase in which the
backward recursion is performed without re-estimating regression coefficients.

Given \(P_k\) the square-integrable payoff process of the derivative,
and recalling that the underlying process
\(\{S_k\}_{k=0,\dots,n_T}\) is Markovian, the optimal stopping problem can be
expressed as the search for the value process
\begin{align*}
	\mathcal{V}_k
	\isdef
	& \operatorname*{ess\,sup}_{\tau \in \mathcal{T}_{k,n_T}}
	\mathbb{E} \!\left[ P_\tau \,\middle|\, S_k \right],
\end{align*}
where \(\mathcal{T}_{k,n_T}\) denotes the set of all stopping times taking
values in \(\{k, \dots, n_T\}\).
A stopping time \(\tau_k \in \mathcal{T}_{k,n_T}\) is called \emph{optimal} if
\begin{equation}\label{eq:opt_cond}
    \mathcal{V}_k = \mathbb{E}[P_{\tau_k} \mid S_k].
\end{equation}
As introduced in Equation~\eqref{eq:backward_recursion_intro}, from standard
optimal stopping theory \citep{karatzas1998,glasserman04}, it follows that
\begin{equation}\label{eq:backward_recursion}
\begin{aligned}
	\mathcal{V}_{n_T} &= P_{n_T}, \\
  \mathcal{V}_k &= \max \big\{ P_k, \mathbb{E}[ \mathcal{V}_{k+1} 
  \mid S_k ] \big\},
	\quad k = 0, \dots, {n_T}-1. \end{aligned}
\end{equation}
Combining the recursion \eqref{eq:backward_recursion} for \(k=0\) with the
optimality condition \eqref{eq:opt_cond} yields
\begin{align*}
	\mathcal{V}_0
  &= \max \big\{ P_0, \mathbb{E}[ \mathcal{V}_1 \mid S_0 ] \big\}
	\\
  &= \max \big\{ P_0, 
  \mathbb{E}[ \mathbb{E}[ P_{\tau_1} \mid S_1 ] \mid S_0 ] \big\}
	\\
  &= \max \big\{ P_0, \mathbb{E}[ P_{\tau_1} \mid S_0 ] \big\},
\end{align*}
where the last equality follows from the tower property of conditional 
expectation.
Given that \(S_0\) is deterministic, it follows that
\begin{align}\label{eq:init_value}
	\mathcal{V}_0
	&= \mathbb{E}[ P_{\tau_0} ]
  = \max \big\{ P_0, \mathbb{E}[ P_{\tau_1} \mid S_0 ] \big\}
  = \max \big\{ P_0, \mathbb{E}[ P_{\tau_1} ] \big\}.
\end{align}
This recursive characterization provides the foundation for studying how the
approximation error in the CME evolves backward in time through the sequence of
conditional expectations in \eqref{eq:backward_recursion}.
Before deriving the error bound, we introduce some notation and recall a
fundamental property of the conditional expectation operator. 

For any square-integrable measurable function
$f \in L^2_k \isdef L^2(\Omega, \mathcal{F}_k, \mathbb{Q})$, we consider the
associated norm
\[
\| f \|_{L^2_k}^2 \isdef \mathbb{E}\left[ |f(S_k)|^2 \right].
\]
The conditional expectation operator, which maps a function \(f \in L^2_{k+1}\) 
to a function in \(L^2_k\) according to
\[
\mathbb{E}\left[ f(S_{k+1}) \mid S_k = x \right],
\]
is a contraction in the \(L^2\) sense, meaning that
\[
\big\| \mathbb{E}\left[ f(S_{k+1}) \mid S_k = \cdot \right] 
\big\|_{L^2_k} \le \| f \|_{L^2_{k+1}},
\]
which follows directly from Jensen's inequality applied to the conditional
expectation. Having established this property for the exact conditional
expectation operator, we now introduce its CME-based approximation, which will
be used to analyze the backward error propagation in the recursive scheme.
For each time step \(k = 0, \dots, {n_T}-1\), we denote by
\[
\tilde{\mathbb{E}}\left[ P_{\tilde{\tau}_{k+1}} \mid S_k = x \right]
\]
the low-rank CME approximation $\tilde \mu \in \tilde{\mathcal{H}}$ introduced
in Section \ref{sec:conditionaldistemblowrank}. Similarly, we obtain the CME
approximated analog of the recursion \eqref{eq:init_value}, namely
\begin{equation}\label{eq:init_value_approx}
\tilde{\mathbb{E}}\left[ P_{\tilde{\tau}_0} \right]
= 
\max\big\{ P_0, \tilde{\mathbb{E}}\left[ P_{\tilde{\tau}_1} 
\mid S_0 \right] \big\}
=
\max\big\{ P_0, \tilde{\mathbb{E}}\left[ P_{\tilde{\tau}_1} \right] \big\}.
\end{equation}
Observe that all the conditional expectations appearing in the backward
recursion, namely 
\(\mathbb{E}[ P_{\tau_k}\mid S_k ]\),
\(\mathbb{E}[ P_{\tau_{k+1}} \mid S_k ]\),
and their CME-based counterparts, belong to \(L^2_k\) for each \(k\).  
This follows from the square-integrability of \(P_k\) and from the
\(L^2\)-contraction property of the conditional expectation operator,
applied inductively backward in time.

In the CME framework, since the same operator is used at every exercise date, 
the error incurred at one step does not come from a new regression fit, but from 
the repeated application of the same offline-trained approximation.
To bound the local approximation error, we work in the $L^2$ norm, 
which is consistent with the classical framework used in the option pricing 
literature. For the statistical error, this corresponds to setting $\gamma = 0$ 
in \eqref{eq:stat_err_Li}, giving the bound $\delta^S_0$. For the low-rank 
approximation error, the bound \eqref{eq:delta_LR} is established in the 
$\Hcal$-norm. To transfer it to the $L^2$ norm, we use the standard embedding, 
\[
\|\hat{\mu} - \tilde{\mu}\|_{L^2} \leq 
\kappa_X \|\hat{\mu} - \tilde{\mu}\|_{\Hcal},
\]
where $\kappa_X = \sup_x \sqrt{k_\Xcal(x,x)}$ (observe that this is precisely
the boundedness condition assumed in \textnormal{(H3)} of
Appendix~\ref{app:stat_error}, where the same constant $\kappa_X$ appears).
We therefore obtain, for each time step $j = 0, \dots, n_T-1$, the local 
approximation error bound
\begin{equation}\label{eq:local_err}
\big\| \tilde{\mathbb{E}}[P_{\tilde{\tau}_{j+1}} \mid S_j]
- \mathbb{E}[P_{{\tilde \tau}_{j+1}} \mid S_j] \big\|_{L^2_j} 
\leq \sqrt{\delta^S_0}  +  \kappa_X \sqrt{\delta^{LR}}
\end{equation}
as shown in Equations~\eqref{eq:delta_LR} and~\eqref{eq:stat_err_Li}, 
and via the triangle inequality \eqref{eq:tot_err}.
Due to the time-homogeneity of the Markov process and the reuse of the same 
operator at each step, the bound in Eq.~\eqref{eq:local_err} is independent
of $j$.

Since the pricing algorithm with the low-rank CME approximation follows the same 
backward dynamic programming structure as the Longstaff-Schwartz algorithm, 
differing only in the way the continuation value is approximated at each step, 
the local backward error propagation result of \citet{Zanger2009123} applies 
directly to our setting. Indeed, the key properties required by
\citet{Zanger2009123}, namely the square-integrability of the value process and
the $L^2$-contraction property of the conditional expectation operator,
have been verified above for our CME-based approximation. We state the result of
Zanger here for completeness.

\begin{proposition}[{\cite[Lemma~2.2]{Zanger2009123}}]
\label{prop:local_err_prop}
For each \(k = 0, \dots, {n_T}-1\), the following inequality holds:
\begin{equation*}
\big\|\tilde{\mathbb{E}}[ P_{\tilde{\tau}_{k+1}} \mid S_k ]  
- \mathbb{E}[ P_{\tau_{k+1}} \mid S_k ]\big\|_{L^2_k} 
\le 2 \sum_{j=k}^{{n_T}-1} \big\|\tilde{\mathbb{E}}[ P_{\tilde{\tau}_{j+1}} \mid S_j ]  
- \mathbb{E}[ P_{ \tilde \tau_{j+1}} \mid S_j ] \big\|_{L^2_j}.
\end{equation*}
\end{proposition}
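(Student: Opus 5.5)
The plan is a backward induction on $k$ using a one-step recursive inequality. Define the local error
\[
e_j \isdef \big\|\tilde{\mathbb{E}}[ P_{\tilde{\tau}_{j+1}} \mid S_j ] - \mathbb{E}[ P_{\tilde\tau_{j+1}} \mid S_j ]\big\|_{L^2_j}
\]
and the accumulated error
\[
\Delta_k \isdef \big\|\tilde{\mathbb{E}}[ P_{\tilde{\tau}_{k+1}} \mid S_k ] - \mathbb{E}[ P_{\tau_{k+1}} \mid S_k ]\big\|_{L^2_k}.
\]
Here $\tilde\tau_{k+1}$ is the stopping rule generated by the approximate continuation values: stop at $j\ge k+1$ when $P_j \ge \tilde{\mathbb{E}}[P_{\tilde\tau_{j+1}}\mid S_j]$, and $\tilde\tau_{n_T}=\tau_{n_T}=n_T$. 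By the triangle inequality,
\[
\Delta_k \le e_k + \big\|\mathbb{E}[ P_{\tilde\tau_{k+1}} - P_{\tau_{k+1}} \mid S_k ]\big\|_{L^2_k}.
\]
The $L^2$-contraction of conditional expectation then reduces the second term to controlling
\[
\big\|\mathbb{E}[P_{\tilde\tau_{k+1}}-P_{\tau_{k+1}}\mid S_{k+1}]\big\|_{L^2_{k+1}}.
\]
One can apply contraction directly to $\mathbb{E}[\cdot\mid S_{k+1}]$ because of the tower property and the Markov property.

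The key step is a pathwise comparison at time $k+1$. On the event $A=\{\tilde\tau_{k+1}=k+1\}$ the approximate rule stops, and on $A^c$ we have $\tilde\tau_{k+1}=\tilde\tau_{k+2}$; similarly $\tau_{k+1}$ is either $k+1$ or $\tau_{k+2}$. Write $C_{k+1}=\mathbb{E}[P_{\tau_{k+2}}\mid S_{k+1}]$ for the true continuation value and $\tilde C_{k+1}=\tilde{\mathbb{E}}[P_{\tilde\tau_{k+2}}\mid S_{k+1}]$ for its approximation. Conditioning on $S_{k+1}$ gives four cases.
\begin{itemize}
\item If both rules stop or both continue, the difference is $0$ or $\mathbb{E}[P_{\tilde\tau_{k+2}}-P_{\tau_{k+2}}\mid S_{k+1}]$.
\item If the rules disagree, the true rule is optimal, so the difference is nonpositive and bounded in absolute value by $|P_{k+1}-C_{k+1}|$. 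Since $P_{k+1}$ lies between $\tilde C_{k+1}$ and $C_{k+1}$ in this case, this is at most $|\tilde C_{k+1}-C_{k+1}|=\Delta_{k+1}$ pointwise.
\end{itemize}
The same holds when $\mathbb{E}[P_{\tilde\tau_{k+2}}\mid S_{k+1}]$ is compared in place of $C_{k+1}$, at the price of adding $e_{k+1}$. Combining the cases pointwise yields
\[
\big|\mathbb{E}[P_{\tilde\tau_{k+1}}-P_{\tau_{k+1}}\mid S_{k+1}]\big| \le |\tilde C_{k+1}-C_{k+1}| + |\tilde C_{k+1} - \mathbb{E}[P_{\tilde\tau_{k+2}}\mid S_{k+1}]|,
\]
hence
\[
\big\|\mathbb{E}[P_{\tilde\tau_{k+1}}-P_{\tau_{k+1}}\mid S_{k+1}]\big\|_{L^2_{k+1}} \le \Delta_{k+1}+e_{k+1}.
\]

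The main obstacle is this case analysis: getting exactly the factor $2$ rather than a geometric blow-up. I would handle it by first proving the sharper pointwise bound
\[
0\le \mathbb{E}[P_{\tau_{k+1}}-P_{\tilde\tau_{k+1}}\mid S_{k+1}] \le |\tilde C_{k+1}-C_{k+1}|\mathbbm 1_{\{\text{disagree}\}} + \mathbbm 1_{A^c}\,\mathbb{E}[P_{\tau_{k+2}}-P_{\tilde\tau_{k+2}}\mid S_{k+1}].
\]
Iterating it inside the tower property gives a telescoping sum of indicator-weighted one-step errors. At each level, $|\tilde C_j - C_j|$ is bounded by $e_j$ plus the conditional suboptimality of $\tilde\tau_{j+1}$, each counted once. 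This should lead to $\Delta_k\le e_k+\sum_{j>k}2e_j\le 2\sum_{j\ge k}e_j$.

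The induction closes with the base case $\Delta_{n_T-1}=e_{n_T-1}$, which holds since $\tilde\tau_{n_T}=\tau_{n_T}$. If the pointwise telescoping proves delicate, the fallback is to follow Zanger's argument verbatim; it uses only square-integrability and $L^2$-contraction, both verified above.
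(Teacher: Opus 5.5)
The argument you need is already in the first half of your proposal, and it is correct. It also differs genuinely from the paper, which gives no proof: it only invokes \citet[Lemma~2.2]{Zanger2009123} and asserts that square-integrability and the $L^2$-contraction of conditional expectation suffice. Your four-case comparison makes the mechanism explicit. It also surfaces a hypothesis the citation leaves implicit: each $\tilde C_j$ must be a fixed measurable function of $S_j$, so that the exercise events are $\sigma(S_j)$-measurable and can be pulled out of conditional expectations. This is not literally true in Algorithm~\ref{alg:off_cmelr}, where the operator is trained on the pricing paths themselves.

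You do, however, misdiagnose the ``main obstacle''. Your triangle-inequality step together with your combined pointwise bound gives the \emph{linear} recursion $\Delta_k\le e_k+e_{k+1}+\Delta_{k+1}$, in which $\Delta_{k+1}$ enters with coefficient one. Unrolling it down to $\Delta_{n_T-1}=e_{n_T-1}$ yields $\Delta_k\le e_k+2\sum_{j=k+1}^{n_T-1}e_j$ immediately, so there is no geometric growth to fight. Just make this strengthened form, not the bare statement, your induction hypothesis.

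The detour you propose instead is where things would actually break. Your indicator-weighted bound is valid. But suppose that after iterating it you replace each $|\tilde C_j-C_j|$ by $e_j$ plus $C_j-\hat C_j$, with $\hat C_j\isdef\mathbb{E}[P_{\tilde\tau_{j+1}}\mid S_j]$, and then re-expand $C_j-\hat C_j$ through the same telescoping sum. Later errors are then counted repeatedly and their weights grow like $2^{j-k-1}$; ``each counted once'' is asserted, not derived.

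Your second bullet also misstates one disagreement case, and the patch ``at the price of adding $e_{k+1}$'' is too vague to fix it. When the true rule stops and the approximate one continues, the conditional difference is $P_{k+1}-\hat C_{k+1}$. This exceeds $P_{k+1}-C_{k+1}$ by $C_{k+1}-\hat C_{k+1}\ge 0$; the correct bound is $\tilde C_{k+1}-\hat C_{k+1}$, which follows from $P_{k+1}<\tilde C_{k+1}$. Your subsequent combined inequality is nonetheless true in all four cases (for both-continue, use $C_{k+1}-\hat C_{k+1}\le|C_{k+1}-\tilde C_{k+1}|+|\tilde C_{k+1}-\hat C_{k+1}|$), so the recursion stands.

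If you want the telescoping idea to pay off, bound every disagreement residual by the local error only. For example, when the true rule continues and the approximate one stops, write $C_j-P_j=(C_j-\hat C_j)+(\hat C_j-P_j)$. With $D_j\isdef\mathbb{E}[P_{\tau_j}-P_{\tilde\tau_j}\mid S_j]$ this gives $0\le D_j\le|\tilde C_j-\hat C_j|+\mathbb{E}[D_{j+1}\mid S_j]$. Since $C_k-\hat C_k=\mathbb{E}[D_{k+1}\mid S_k]$, contraction and Minkowski then give $\Delta_k\le\sum_{j=k}^{n_T-1}e_j$, i.e.\ the proposition without the factor $2$.
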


Proposition~\ref{prop:local_err_prop} controls how local approximation errors 
at individual exercise dates accumulate backward in time. In the 
Longstaff-Schwartz framework, this local error arises from fitting a new 
regression at each step, so the bound involves the regression error at every 
date. In our CME-based approach, the same offline-trained operator is reused 
at every exercise date, so each local error term in the sum is bounded by the 
same quantity, see Equation~\eqref{eq:local_err}.
From Equations~\eqref{eq:init_value}--\eqref{eq:init_value_approx} and the
inequality
$|\max(a,b) - \max(a,c)| \leq |b-c|$, we have
\begin{align*}
\big| \tilde{\mathbb{E}}[P_{\tilde{\tau}_0}]
- \mathbb{E}[P_{\tau_0}] \big|
&\le
\big| \tilde{\mathbb{E}}[P_{\tilde{\tau}_1} \mid S_0]
- \mathbb{E}[P_{\tau_1} \mid S_0] \big|
=
\big\| \tilde{\mathbb{E}}[P_{\tilde{\tau}_1} \mid S_0]
- \mathbb{E}[P_{\tau_1} \mid S_0] \big\|_{L^2_0},
\end{align*}
where the last equality follows from the fact that $S_0$ is deterministic.
Applying Proposition~\ref{prop:local_err_prop} with $k = 0$, we obtain
\begin{align*}
\big\| \tilde{\mathbb{E}}[P_{\tilde{\tau}_1} \mid S_0]
- \mathbb{E}[P_{\tau_1} \mid S_0] \big\|_{L^2_0}
&\le
2 \sum_{j=0}^{n_T - 1}
\big\| \tilde{\mathbb{E}}[P_{\tilde{\tau}_{j+1}} \mid S_j]
- \mathbb{E}[P_{{\tilde \tau}_{j+1}} \mid S_j] \big\|_{L^2_j}.
\end{align*}
Since in the offline CME setting, the local error bound \eqref{eq:local_err} is
the same at every exercise date, we conclude that the pricing error at the
initial time satisfies
\[
\big| \tilde{\mathbb{E}}[P_{\tilde{\tau}_0}]
- \mathbb{E}[P_{\tau_0}] \big|
\le
2 \sum_{j=0}^{n_T-1} \left(\sqrt{\delta^S_0} 
+ \kappa_X  \sqrt{\delta^{LR}}\right)
=
2 n_T \left(\sqrt{\delta^S_0} + \kappa_X \sqrt{\delta^{LR}}\right).
\]
This shows that the total pricing error grows at most linearly in the number 
of exercise dates $n_T$, with the proportionality constant controlled by 
both the statistical error $\delta^S_0$ and the low-rank approximation error 
$\delta^{LR}$ analyzed in Section~\ref{sec:conv_rate}. 
This structure is computationally advantageous: the offline phase is performed 
once, and the online backward recursion reduces to a sequence of matrix-vector 
products. The complete pricing procedure is summarized in 
Algorithm~\ref{alg:off_cmelr}, which will be used in the numerical experiments 
of Section~\ref{sec:num_test}.

\begin{algorithm}[htb]
\caption{Offline low-rank CME for American option pricing}
\label{alg:off_cmelr}
\begin{algorithmic}[1]

\Require Simulated paths $\{S_k^{(i)}\}^{i=1,\dots,n}_{k=0,\dots,n_T}$,
         payoff $\{P_k^{(i)}\}^{i=1,\dots,n}_{k=0,\dots,n_T}$,
         kernels $k_\Xcal, k_\Ycal$, regularization parameter $\lambda$, 
         Cholesky tolerance $\varepsilon$
\Ensure  Estimated American option value $\tilde{{\Vcal}}_0$

\vspace{0.4em}
\textbf{// Offline phase: construct the low-rank CME operator}

\State Consider the state variables $\tilde X= (\tilde x_i)_{i=1}^n,
\tilde Y = (\tilde y_i)_{i=1}^n$ at the reference grid time $n_T-1$:
\[
\tilde x_i = S_{n_T-1}^{(i)},
\qquad
\tilde y_i = S_{n_T}^{(i)}
\]

\State Compute the pivoted Cholesky factorization (Thm.~\ref{thmChol})
with tolerance $\varepsilon$ of the kernel matrices $\bm{K}_{\tilde X}, 
\bm{K}_{\tilde Y} \in \R^{n\times n}$ with entries
$[\bm{K}_{\tilde X}]_{ij} = k_\Xcal(\tilde x_i, \tilde x_j)$
and
$[\bm{K}_{\tilde Y}]_{ij} = k_\Ycal(\tilde y_i, \tilde y_j)$:

\[
\bm{K}_{\tilde X} \approx \bm{L}_{\tilde X} \bm{L}_{\tilde X}^\top, 
\quad \bm{K}_{\tilde Y} \approx \bm{L}_{\tilde Y} \bm{L}_{\tilde Y}^\top,
\]
together with the basis matrices $\bm{B}_{\tilde X} 
\in \R^{n\times m_{\tilde X}},
\bm{B}_{\tilde Y} \in \R^{n\times m_{\tilde Y}}$

\State Compute the spectral decompositions:

\[\bm{V}_{\tilde X} \bm{\Lambda}_{\tilde X} \bm{V}_{\tilde X}^\top 
= \bm{L}_{\tilde X}^\top \bm{L}_{\tilde X}, \qquad \bm{V}_{\tilde Y} 
\bm{\Lambda}_{\tilde Y} \bm{V}_{\tilde Y}^\top 
= \bm{L}_{\tilde Y}^\top \bm{L}_{\tilde Y}\]
and set $\R^{n \times m_{\tilde X}} \ni \bm{Q}_{\tilde X} 
= \bm{B}_{\tilde X} \bm{V}_{\tilde X}, \  
\R^{n \times m_{\tilde Y}} \ni \bm{Q}_{\tilde Y} 
= \bm{B}_{\tilde Y} \bm{V}_{\tilde Y}$

\State Precompute and store $\tilde{\bm F} \in 
\R^{m_{\tilde Y} \times m_{\tilde X}}$ as in \eqref{eq:Ftilde}:
\[
\tilde{\bm F} = (\bm L_{\tilde Y} \bm V_{\tilde Y})^\T (\bm L_{\tilde X} 
\bm V_{\tilde X}) (\bm \Lambda_{\tilde X} + n \lambda \bm I_{m_{\tilde X}})^{-1}
\]

\vspace{0.4em}
\textbf{// Initialization}

\State Set terminal values: $\tilde \Vcal^{(i)}_{n_T} = P_{n_T}^{(i)}$, 
$i = 1,\dots,n$

\vspace{0.4em}
\textbf{// Online phase: backward recursion}
\For{$k = n_T - 1, \dots, 1$}
    \State Set future values $\bm{f}\in \R^n$ with components 
    $f^{(i)} = \tilde \Vcal^{(i)}_{k+1}$, $i = 1,\dots,n$
    \State Consider the current states $ X= ( x_i)_{i=1}^n$:
    \[
     x_i = S_{k}^{(i)}
    \]
\State Evaluate the feature vector 
$\bs\Phi_{\tilde X}(x_i) = [k_\Xcal(\tilde x_j, x_i)]_{j=1}^n$
        
\State Estimate the continuation value $\tilde{C} \in \R^n$ 
via \eqref{eq:sol_lowrank2}:
        \[
        \tilde{C} = \bm{f}^\T \, \bm{Q}_{\tilde Y}  \tilde{\bm F}  
        \bm{Q}_{\tilde X}^\T \, \bs\Phi_{\tilde X}^\top(x)
        \]

    \State Apply the optimal stopping rule as in 
    \eqref{eq:backward_recursion_intro}:
\[
  \tilde{\mathcal{V}}_k^{(i)} = \max\big\{P_k^{(i)},\, \tilde{C}^{(i)}\big\},
  \quad i = 1, \dots, n.
\]
\EndFor
\State \Return $\tilde{{\Vcal}}_0 
= \max \left\{P_0 , \frac{1}{n}\sum_{i=1}^n \tilde
\Vcal_1^{(i)} \right\}$
\end{algorithmic}
\end{algorithm}

\section{Numerical experiments}\label{sec:num_test}
In this section, we present numerical experiments to evaluate the proposed 
(offline) low-rank conditional mean embedding algorithm (CME-LR), in American
option pricing problems. In particular, we use Algorithm \ref{alg:off_cmelr},
for pricing American put options under the Heston stochastic volatility model
\citet{heston93}, whose dynamics are defined through the stochastic differential
equations
\begin{align*}
  \operatorname{d}\!\log S_t &= \Bigl(r - \tfrac{1}{2}\nu_t\Bigr)
  \operatorname{d}\!t 
    + \sqrt{\nu_t}\operatorname{d}\!W_t^S, \\
    \operatorname{d}\!\nu_t &= \kappa (\theta - \nu_t) 
    \operatorname{d}\!t + \xi \sqrt{\nu_t} \operatorname{d}\!W_t^\nu,
\end{align*}
with $\operatorname{corr}( \operatorname{d}\!W_t^S, 
\operatorname{d}\!W_t^\nu) = \rho$.
Here $\kappa$ is the mean-reversion speed, $\theta$ the long-term variance,
$\xi$ the volatility of variance, and $\rho$ the correlation between the asset
and its variance process.
The model parameters used throughout are summarized in
Table~\ref{tab:heston_params}.
\begin{table}[ht]
\centering
\renewcommand{\arraystretch}{1.3}
\begin{tabular}{lll}
\hline
\textbf{Parameter} & \textbf{Symbol} & \textbf{Value} \\
\hline
Initial asset price      & $S_0$    & $100$ \\
Initial variance         & $\nu_0$  & $0.04$\\
Risk-free rate           & $r$      & $0$ \\
Mean-reversion speed     & $\kappa$ & $2$ \\
Long-term variance       & $\theta$ & $0.04$ \\
Volatility of variance   & $\xi$    & $0.3$ \\
Correlation              & $\rho$   & $-0.7$ \\
\hline
\end{tabular}
\caption{Heston model parameters used in the numerical experiments.}
\label{tab:heston_params}
\end{table}
The variance SDE is discretized via a full-truncation Euler-Maruyama scheme,
with the variance truncated from below at $10^{-8}$ to prevent negative values.
The number of time steps is set to $n_T = \max(20,\, \lfloor 52 T \rfloor)$,
corresponding approximately to weekly monitoring.

To apply Algorithm \ref{alg:off_cmelr}, the state variable used in the CME
framework is defined through 
$X = (\log S_k,\, \nu_k)$,  and  $Y = (\log S_{k+1},\, \nu_{k+1})$.
After comparing several kernel combinations, the best-performing configuration
uses the following kernels.
For the state variable $X = (\log S_k, \nu_k)$, we adopt a polynomial kernel of
order $4$,
\[
    k_\Xcal(x, x') = (1 + x^\top x')^4.
\]
\begin{remark}
The convergence result of \citet[Theorem~3]{LiMeunier_etal2024},
used here to bound the statistical error in Equation~\eqref{eq:stat_err_Li},
formally requires $k_\Xcal$ to be uniformly bounded, 
see Appendix~\ref{app:stat_error}. However, this assumption is only invoked in
the proof to ensure Bochner-integrability of a certain random operator, 
and can be replaced by requiring the first four absolute moments of $X$ to be
finite. Under Heston dynamics, the moments of $\log S_t$ are not automatically
finite to all orders: the moment generating function of the log-price exhibits
explosions outside an explicit critical interval, characterized for general
affine stochastic volatility models by
\citet{kellerressel11}, with the corresponding implications for implied
volatility asymptotics at extreme strikes given by \citet{lee04moment}.
The required four-moment condition therefore restricts the admissible Heston
parameter regime, but is satisfied for the configurations considered in our
experiments.
\end{remark}
For the transition variable $Y = (y_i)_{i=1}^n$, with 
$y_i = \log S_{k+1}^{(i)}$, we have found the dependence on $\nu_{k+1}$ to be
negligible, and use a reduced univariate Mat\'{e}rn-$3/2$ kernel, 
\[
    k_\Ycal(y, y') =
    \bigg(1 + \frac{\sqrt{3}|y - y'|}{\ell}\bigg)
    \exp\bigg(-\frac{\sqrt{3}|y - y'|}{\ell}\bigg),
\]
with lengthscale set by the median heuristic:
$
\ell = \operatorname{median}\bigl\{ |y_i - y_j| : 1 \le i < j \le n \bigr\}. 
$ Note that this kernel pertains to a function space norm-equivalent to the 
Sobolev space $H^2(\mathbb R)$ of twice differentiable functions. 
The regularization parameter is set to $\lambda = n^{-1/2}$,
and the Cholesky tolerance is $\varepsilon = 10^{-5}$. This configuration is
compared against the Longstaff-Schwartz baseline
(LS), which uses a degree-$4$ polynomial basis in $(\log S_k, \nu_k)$.

\begin{remark}[Behavior of the error bounds with $\lambda = n^{-1/2}$]
\leavevmode
\\
\textbf{Statistical error.}
The regularization parameter $\lambda = n^{-1/2}$ is a standard data-driven 
choice that does not require knowledge of the smoothness parameters $\beta$ and
$p$ appearing in Theorem~\ref{thm:stat_error}. To understand its theoretical 
implications, note that the optimal $\lambda$ in case~(2) of 
Theorem~\ref{thm:stat_error} is $\lambda_n = \Theta(n^{-1/(\beta+p)})$. 
The choice $\lambda = n^{-1/2}$ therefore corresponds to implicitly assuming 
$\beta + p = 2$. For the polynomial kernel of order~$4$, the decay parameter 
$p$ is not known explicitly, and whether the well-specified ($\beta \geq 1$) 
or misspecified ($\beta < 1$) case applies depends on the regularity of the 
continuation value under the Heston model. Taking the conservative worst-case 
scenario $\gamma = 0$, i.e., the $L^2$ norm, see Appendix~\ref{app:stat_error}, 
and $\beta + p = 2$, the statistical error bound of Theorem~\ref{thm:stat_error} 
gives a rate of $n^{-\beta/2}$, which for $\beta = 1$ reduces to $n^{-1/2}$,
the classical nonparametric rate. In particular, $\delta^S_0 \to 0$ as 
$n \to \infty$ for any fixed $\beta > 0$.

\textbf{Low-rank approximation error.}
With $\lambda = n^{-1/2}$, the coefficient matrix satisfies 
$\|\bm F\|_F^2 = \|(\bm K_X + n\lambda \bm I_n)^{-1}\|_F^2 
= \mathcal{O}\big(1/(n\lambda)^2\big) = \mathcal{O}(1)$, 
while $\operatorname{trace}(\bm K_X) + \operatorname{trace}(\bm K_Y) 
= \mathcal{O}(n)$, 
so the first term in $\delta^{LR}$ is $\mathcal{O}(\varepsilon)$. 
The second term however satisfies $\varepsilon^2/(n\lambda)^4 \cdot 
\operatorname{trace}(\bm K_X)\operatorname{trace}(\bm K_Y) 
= \mathcal{O}(\varepsilon^2)$, 
also constant in $n$. Hence $\delta^{LR}$ does not vanish as $n \to \infty$ 
for fixed $\varepsilon$, in contrast to the statistical error. 
Instead, $\delta^{LR} \to 0$ as $\varepsilon \to 0$, reflecting that 
the low-rank error is controlled by the Cholesky tolerance rather than 
the sample size.
\end{remark}

\definecolor{colorLS} {RGB}{31,119,180}
\definecolor{colorCME}{RGB}{214,39,40}
\definecolor{cNA}{RGB}{55,126,184}
\definecolor{cNB}{RGB}{77,175,74}
\definecolor{cNC}{RGB}{228,26,28}
\definecolor{cND}{RGB}{152,78,163}

\newcounter{cibandcnt}
\newcommand{\ciband}[4]{%
\stepcounter{cibandcnt}%
\edef\pathlo{plo\thecibandcnt}%
\edef\pathi{phi\thecibandcnt}%
\addplot[#1, opacity=0.0, forget plot, name path=\pathlo]
    table[x=N, y=#2, col sep=comma]{#4};
\addplot[#1, opacity=0.0, forget plot, name path=\pathi]
    table[x=N, y=#3, col sep=comma]{#4};
\addplot[#1, fill opacity=0.15, draw=none, forget plot]
    fill between[of=\pathlo and \pathi];
}

\newcommand{\cibandMK}[4]{%
\stepcounter{cibandcnt}%
\edef\pathlo{plo\thecibandcnt}%
\edef\pathi{phi\thecibandcnt}%
\addplot[#1, opacity=0.0, forget plot, name path=\pathlo]
    table[x=logmoneyness, y=#2, col sep=comma]{#4};
\addplot[#1, opacity=0.0, forget plot, name path=\pathi]
    table[x=logmoneyness, y=#3, col sep=comma]{#4};
\addplot[#1, fill opacity=0.15, draw=none, forget plot]
    fill between[of=\pathlo and \pathi];
}


\begin{figure}[htb]
\centering
\begin{tabular}{cc}
\begin{tikzpicture}
\begin{axis}[title={$T=1/12$}, xlabel={$n$}, ylabel={$\log_{10}(\text{time} / \mu s)$},
    xmode=log, xmin=100, xmax=100000,
    xtick={100,1000,10000,100000}, xticklabels={$10^2$,$10^3$,$10^4$,$10^5$},
    minor xtick={}, minor ytick={}, ymin=1.5, ymax=7,
    width=6cm, height=5cm, grid=both,
    tick label style={font=\small}, label style={font=\small},
    title style={font=\small\bfseries}]
\addplot[colorLS, opacity=0.0, forget plot, name path=pathlo1]
    table[x=N, y=lo_logtime_poly, col sep=comma]{IMG_apr26/winner_time_T1.csv};
\addplot[colorLS, opacity=0.0, forget plot, name path=pathhi1]
    table[x=N, y=hi_logtime_poly, col sep=comma]{IMG_apr26/winner_time_T1.csv};
\addplot[colorLS, fill opacity=0.15, draw=none, forget plot]
    fill between[of=pathlo1 and pathhi1];
\addplot[colorCME, opacity=0.0, forget plot, name path=pathlo2]
    table[x=N, y=lo_logtime_cme, col sep=comma]{IMG_apr26/winner_time_T1.csv};
\addplot[colorCME, opacity=0.0, forget plot, name path=pathhi2]
    table[x=N, y=hi_logtime_cme, col sep=comma]{IMG_apr26/winner_time_T1.csv};
\addplot[colorCME, fill opacity=0.15, draw=none, forget plot]
    fill between[of=pathlo2 and pathhi2];
\addplot[colorLS, mark=*, mark size=2pt, line width=1.2pt]
    table[x=N, y=mean_logtime_poly, col sep=comma]{IMG_apr26/winner_time_T1.csv};
\addplot[colorCME, mark=square*, mark size=2pt, line width=1.2pt, dashed]
    table[x=N, y=mean_logtime_cme, col sep=comma]{IMG_apr26/winner_time_T1.csv};
\end{axis}
\end{tikzpicture}
&
\begin{tikzpicture}
\begin{axis}[title={$T=1/2$}, xlabel={$n$},
    xmode=log, xmin=100, xmax=100000,
    xtick={100,1000,10000,100000}, xticklabels={$10^2$,$10^3$,$10^4$,$10^5$},
    minor xtick={}, minor ytick={}, ymin=1.5, ymax=7,
    width=6cm, height=5cm, grid=both,
    tick label style={font=\small}, label style={font=\small},
    title style={font=\small\bfseries}]
\addplot[colorLS, opacity=0.0, forget plot, name path=pathlo3]
    table[x=N, y=lo_logtime_poly, col sep=comma]{IMG_apr26/winner_time_T2.csv};
\addplot[colorLS, opacity=0.0, forget plot, name path=pathhi3]
    table[x=N, y=hi_logtime_poly, col sep=comma]{IMG_apr26/winner_time_T2.csv};
\addplot[colorLS, fill opacity=0.15, draw=none, forget plot]
    fill between[of=pathlo3 and pathhi3];
\addplot[colorCME, opacity=0.0, forget plot, name path=pathlo4]
    table[x=N, y=lo_logtime_cme, col sep=comma]{IMG_apr26/winner_time_T2.csv};
\addplot[colorCME, opacity=0.0, forget plot, name path=pathhi4]
    table[x=N, y=hi_logtime_cme, col sep=comma]{IMG_apr26/winner_time_T2.csv};
\addplot[colorCME, fill opacity=0.15, draw=none, forget plot]
    fill between[of=pathlo4 and pathhi4];
\addplot[colorLS, mark=*, mark size=2pt, line width=1.2pt]
    table[x=N, y=mean_logtime_poly, col sep=comma]{IMG_apr26/winner_time_T2.csv};
\addplot[colorCME, mark=square*, mark size=2pt, line width=1.2pt, dashed]
    table[x=N, y=mean_logtime_cme, col sep=comma]{IMG_apr26/winner_time_T2.csv};
\end{axis}
\end{tikzpicture}
\\[6pt]
\begin{tikzpicture}
\begin{axis}[title={$T=1$}, xlabel={$n$}, ylabel={$\log_{10}(\text{time} / \mu s)$},
    xmode=log, xmin=100, xmax=100000,
    xtick={100,1000,10000,100000}, xticklabels={$10^2$,$10^3$,$10^4$,$10^5$},
    minor xtick={}, minor ytick={}, ymin=1.5, ymax=7,
    width=6cm, height=5cm, grid=both,
    tick label style={font=\small}, label style={font=\small},
    title style={font=\small\bfseries}]
\addplot[colorLS, opacity=0.0, forget plot, name path=pathlo5]
    table[x=N, y=lo_logtime_poly, col sep=comma]{IMG_apr26/winner_time_T3.csv};
\addplot[colorLS, opacity=0.0, forget plot, name path=pathhi5]
    table[x=N, y=hi_logtime_poly, col sep=comma]{IMG_apr26/winner_time_T3.csv};
\addplot[colorLS, fill opacity=0.15, draw=none, forget plot]
    fill between[of=pathlo5 and pathhi5];
\addplot[colorCME, opacity=0.0, forget plot, name path=pathlo6]
    table[x=N, y=lo_logtime_cme, col sep=comma]{IMG_apr26/winner_time_T3.csv};
\addplot[colorCME, opacity=0.0, forget plot, name path=pathhi6]
    table[x=N, y=hi_logtime_cme, col sep=comma]{IMG_apr26/winner_time_T3.csv};
\addplot[colorCME, fill opacity=0.15, draw=none, forget plot]
    fill between[of=pathlo6 and pathhi6];
\addplot[colorLS, mark=*, mark size=2pt, line width=1.2pt]
    table[x=N, y=mean_logtime_poly, col sep=comma]{IMG_apr26/winner_time_T3.csv};
\addplot[colorCME, mark=square*, mark size=2pt, line width=1.2pt, dashed]
    table[x=N, y=mean_logtime_cme, col sep=comma]{IMG_apr26/winner_time_T3.csv};
\end{axis}
\end{tikzpicture}
&
\begin{tikzpicture}
\begin{axis}[title={$T=2$}, xlabel={$n$},
    xmode=log, xmin=100, xmax=100000,
    xtick={100,1000,10000,100000}, xticklabels={$10^2$,$10^3$,$10^4$,$10^5$},
    minor xtick={}, minor ytick={}, ymin=1.5, ymax=7,
    width=6cm, height=5cm, grid=both,
    tick label style={font=\small}, label style={font=\small},
    title style={font=\small\bfseries}]
\addplot[colorLS, opacity=0.0, forget plot, name path=pathlo7]
    table[x=N, y=lo_logtime_poly, col sep=comma]{IMG_apr26/winner_time_T4.csv};
\addplot[colorLS, opacity=0.0, forget plot, name path=pathhi7]
    table[x=N, y=hi_logtime_poly, col sep=comma]{IMG_apr26/winner_time_T4.csv};
\addplot[colorLS, fill opacity=0.15, draw=none, forget plot]
    fill between[of=pathlo7 and pathhi7];
\addplot[colorCME, opacity=0.0, forget plot, name path=pathlo8]
    table[x=N, y=lo_logtime_cme, col sep=comma]{IMG_apr26/winner_time_T4.csv};
\addplot[colorCME, opacity=0.0, forget plot, name path=pathhi8]
    table[x=N, y=hi_logtime_cme, col sep=comma]{IMG_apr26/winner_time_T4.csv};
\addplot[colorCME, fill opacity=0.15, draw=none, forget plot]
    fill between[of=pathlo8 and pathhi8];
\addplot[colorLS, mark=*, mark size=2pt, line width=1.2pt]
    table[x=N, y=mean_logtime_poly, col sep=comma]{IMG_apr26/winner_time_T4.csv};
\addplot[colorCME, mark=square*, mark size=2pt, line width=1.2pt, dashed]
    table[x=N, y=mean_logtime_cme, col sep=comma]{IMG_apr26/winner_time_T4.csv};
\end{axis}
\end{tikzpicture}
\\[6pt]
\multicolumn{2}{c}{%
\begin{tikzpicture}
\draw[colorLS, line width=1.2pt] (0,0) -- (0.6,0);
\fill[colorLS] (0.3,0) circle (2pt);
\node[right] at (0.6,0) {\small LS};
\draw[colorCME, line width=1.2pt, dashed] (3.2,0) -- (3.8,0);
\fill[colorCME] (3.5,0) circle (2pt);
\node[right] at (3.8,0) {\small CME-LR};
\end{tikzpicture}}
\end{tabular}
\caption{Mean $\log_{10}$ computation time (in $\mu$s) as a function of $n$,
for LS (blue solid) and CME-LR (red dashed).
}
\label{fig:timing}
\end{figure}


\begin{figure}[htb]
\centering

\definecolor{cEps4}{RGB}{31,119,180}
\definecolor{cEps5}{RGB}{214,39,40}
\definecolor{cEps6}{RGB}{44,160,44}

\begin{tabular}{cc}
\begin{tikzpicture}
\begin{axis}[title={$T=1/12$}, xlabel={$n$}, ylabel={Rank of $\bm{K}_Y$},
    xmode=log, xmin=100, xmax=100000,
    xtick={100,1000,10000,100000}, xticklabels={$10^2$,$10^3$,$10^4$,$10^5$},
    minor xtick={}, minor ytick={}, ymin=0, ymax=550,
    width=6cm, height=5cm, grid=both,
    tick label style={font=\small}, label style={font=\small},
    title style={font=\small\bfseries}]
\addplot[cEps4, mark=square*, mark size=2pt, line width=1.2pt]
    coordinates {(100,40.57)(1000,70.04)(10000,91.28)(100000,108.99)};
\addplot[cEps5, mark=*, mark size=2pt, line width=1.2pt, dashed]
    coordinates {(100,59.23)(1000,132.48)(10000,182.46)(100000,223.48)};
\addplot[cEps6, mark=triangle*, mark size=2pt, line width=1.2pt, dotted]
    coordinates {(100,75.60)(1000,239.50)(10000,361.89)(100000,450.93)};
\end{axis}
\end{tikzpicture}
&
\begin{tikzpicture}
\begin{axis}[title={$T=1/2$}, xlabel={$n$},
    xmode=log, xmin=100, xmax=100000,
    xtick={100,1000,10000,100000}, xticklabels={$10^2$,$10^3$,$10^4$,$10^5$},
    minor xtick={}, minor ytick={}, ymin=0, ymax=550,
    width=6cm, height=5cm, grid=both,
    tick label style={font=\small}, label style={font=\small},
    title style={font=\small\bfseries}]
\addplot[cEps4, mark=square*, mark size=2pt, line width=1.2pt]
    coordinates {(100,40.62)(1000,74.40)(10000,100.45)(100000,123.02)};
\addplot[cEps5, mark=*, mark size=2pt, line width=1.2pt, dashed]
    coordinates {(100,59.34)(1000,137.69)(10000,197.90)(100000,243.92)};
\addplot[cEps6, mark=triangle*, mark size=2pt, line width=1.2pt, dotted]
    coordinates {(100,75.22)(1000,244.86)(10000,382.27)(100000,486.20)};
\end{axis}
\end{tikzpicture}
\\[6pt]
\begin{tikzpicture}
\begin{axis}[title={$T=1$}, xlabel={$n$}, ylabel={Rank of $\bm{K}_Y$},
    xmode=log, xmin=100, xmax=100000,
    xtick={100,1000,10000,100000}, xticklabels={$10^2$,$10^3$,$10^4$,$10^5$},
    minor xtick={}, minor ytick={}, ymin=0, ymax=550,
    width=6cm, height=5cm, grid=both,
    tick label style={font=\small}, label style={font=\small},
    title style={font=\small\bfseries}]
\addplot[cEps4, mark=square*, mark size=2pt, line width=1.2pt]
    coordinates {(100,40.39)(1000,73.87)(10000,102.47)(100000,126.63)};
\addplot[cEps5, mark=*, mark size=2pt, line width=1.2pt, dashed]
    coordinates {(100,58.51)(1000,137.45)(10000,199.47)(100000,251.39)};
\addplot[cEps6, mark=triangle*, mark size=2pt, line width=1.2pt, dotted]
    coordinates {(100,74.98)(1000,243.62)(10000,384.19)(100000,501.28)};
\end{axis}
\end{tikzpicture}
&
\begin{tikzpicture}
\begin{axis}[title={$T=2$}, xlabel={$n$},
    xmode=log, xmin=100, xmax=100000,
    xtick={100,1000,10000,100000}, xticklabels={$10^2$,$10^3$,$10^4$,$10^5$},
    minor xtick={}, minor ytick={}, ymin=0, ymax=550,
    width=6cm, height=5cm, grid=both,
    tick label style={font=\small}, label style={font=\small},
    title style={font=\small\bfseries}]
\addplot[cEps4, mark=square*, mark size=2pt, line width=1.2pt]
    coordinates {(100,39.86)(1000,73.43)(10000,100.97)(100000,122.28)};
\addplot[cEps5, mark=*, mark size=2pt, line width=1.2pt, dashed]
    coordinates {(100,58.21)(1000,135.81)(10000,197.42)(100000,242.93)};
\addplot[cEps6, mark=triangle*, mark size=2pt, line width=1.2pt, dotted]
    coordinates {(100,74.21)(1000,240.82)(10000,380.43)(100000,489.76)};
\end{axis}
\end{tikzpicture}
\\[6pt]
\multicolumn{2}{c}{%
\begin{tikzpicture}
\draw[cEps4, line width=1.2pt] (0,0)--(0.6,0);
\fill[cEps4] (0.3,0) rectangle ++(4pt,4pt) +(-2pt,-2pt);
\node[right] at (0.6,0) {\small $\varepsilon=10^{-4}$};
\draw[cEps5, line width=1.2pt, dashed] (3.8,0)--(4.4,0);
\fill[cEps5] (4.1,0) circle (2pt);
\node[right] at (4.4,0) {\small $\varepsilon=10^{-5}$};
\draw[cEps6, line width=1.2pt, dotted] (7.6,0)--(8.2,0);
\fill[cEps6] (7.9,0) -- ++(3pt,0) -- ++(-1.5pt,3pt) -- cycle;
\node[right] at (8.2,0) {\small $\varepsilon=10^{-6}$};
\end{tikzpicture}}
\end{tabular}
\caption{Mean rank of the kernel matrix $\bm{K}_Y$ selected by the pivoted Cholesky
decomposition as a function of $n$, across four maturities, for three values of the
Cholesky tolerance $\varepsilon \in \{10^{-4}, 10^{-5}, 10^{-6}\}$.}
\label{fig:rank}
\end{figure}

\begin{figure}[ht]
\centering
\begin{tabular}{cc}
\begin{tikzpicture}
\begin{axis}[title={$T=1/12$}, xlabel={$n$}, ylabel={Mean relative error},
    xmode=log, xmin=100, xmax=100000,
    xtick={100,1000,10000,100000}, xticklabels={$10^2$,$10^3$,$10^4$,$10^5$},
    minor xtick={}, minor ytick={}, ymin=0, ymax=0.45,
    width=6cm, height=5cm, grid=both,
    legend pos=north east, legend style={font=\small,draw=none},
    tick label style={font=\small}, label style={font=\small},
    title style={font=\small\bfseries}]
\addplot[colorLS, opacity=0.0, forget plot, name path=pathlo9]
    table[x=N, y=lo_poly, col sep=comma]{IMG_apr26/winner_err_T1.csv};
\addplot[colorLS, opacity=0.0, forget plot, name path=pathhi9]
    table[x=N, y=hi_poly, col sep=comma]{IMG_apr26/winner_err_T1.csv};
\addplot[colorLS, fill opacity=0.15, draw=none, forget plot]
    fill between[of=pathlo9 and pathhi9];
\addplot[colorCME, opacity=0.0, forget plot, name path=pathlo10]
    table[x=N, y=lo_cme, col sep=comma]{IMG_apr26/winner_err_T1.csv};
\addplot[colorCME, opacity=0.0, forget plot, name path=pathhi10]
    table[x=N, y=hi_cme, col sep=comma]{IMG_apr26/winner_err_T1.csv};
\addplot[colorCME, fill opacity=0.15, draw=none, forget plot]
    fill between[of=pathlo10 and pathhi10];
\addplot[colorLS,  mark=*, mark size=2pt, line width=1.2pt]
    table[x=N, y=mean_relerr_poly, col sep=comma]{IMG_apr26/winner_err_T1.csv};
\addplot[colorCME, mark=square*, mark size=2pt, line width=1.2pt, dashed]
    table[x=N, y=mean_relerr_cme,  col sep=comma]{IMG_apr26/winner_err_T1.csv};
\end{axis}
\end{tikzpicture}
&
\begin{tikzpicture}
\begin{axis}[title={$T=1/2$}, xlabel={$n$},
    xmode=log, xmin=100, xmax=100000,
    xtick={100,1000,10000,100000}, xticklabels={$10^2$,$10^3$,$10^4$,$10^5$},
    minor xtick={}, minor ytick={}, ymin=0, ymax=0.45,
    width=6cm, height=5cm, grid=both,
    legend pos=north east, legend style={font=\small,draw=none},
    tick label style={font=\small}, label style={font=\small},
    title style={font=\small\bfseries}]
\addplot[colorLS, opacity=0.0, forget plot, name path=pathlo11]
    table[x=N, y=lo_poly, col sep=comma]{IMG_apr26/winner_err_T2.csv};
\addplot[colorLS, opacity=0.0, forget plot, name path=pathhi11]
    table[x=N, y=hi_poly, col sep=comma]{IMG_apr26/winner_err_T2.csv};
\addplot[colorLS, fill opacity=0.15, draw=none, forget plot]
    fill between[of=pathlo11 and pathhi11];
\addplot[colorCME, opacity=0.0, forget plot, name path=pathlo12]
    table[x=N, y=lo_cme, col sep=comma]{IMG_apr26/winner_err_T2.csv};
\addplot[colorCME, opacity=0.0, forget plot, name path=pathhi12]
    table[x=N, y=hi_cme, col sep=comma]{IMG_apr26/winner_err_T2.csv};
\addplot[colorCME, fill opacity=0.15, draw=none, forget plot]
    fill between[of=pathlo12 and pathhi12];
\addplot[colorLS,  mark=*, mark size=2pt, line width=1.2pt]
    table[x=N, y=mean_relerr_poly, col sep=comma]{IMG_apr26/winner_err_T2.csv};
\addplot[colorCME, mark=square*, mark size=2pt, line width=1.2pt, dashed]
    table[x=N, y=mean_relerr_cme,  col sep=comma]{IMG_apr26/winner_err_T2.csv};
\end{axis}
\end{tikzpicture}
\\[6pt]
\begin{tikzpicture}
\begin{axis}[title={$T=1$}, xlabel={$n$}, ylabel={Mean relative error},
    xmode=log, xmin=100, xmax=100000,
    xtick={100,1000,10000,100000}, xticklabels={$10^2$,$10^3$,$10^4$,$10^5$},
    minor xtick={}, minor ytick={}, ymin=0, ymax=0.45,
    width=6cm, height=5cm, grid=both,
    legend pos=north east, legend style={font=\small,draw=none},
    tick label style={font=\small}, label style={font=\small},
    title style={font=\small\bfseries}]
\addplot[colorLS, opacity=0.0, forget plot, name path=pathlo13]
    table[x=N, y=lo_poly, col sep=comma]{IMG_apr26/winner_err_T3.csv};
\addplot[colorLS, opacity=0.0, forget plot, name path=pathhi13]
    table[x=N, y=hi_poly, col sep=comma]{IMG_apr26/winner_err_T3.csv};
\addplot[colorLS, fill opacity=0.15, draw=none, forget plot]
    fill between[of=pathlo13 and pathhi13];
\addplot[colorCME, opacity=0.0, forget plot, name path=pathlo14]
    table[x=N, y=lo_cme, col sep=comma]{IMG_apr26/winner_err_T3.csv};
\addplot[colorCME, opacity=0.0, forget plot, name path=pathhi14]
    table[x=N, y=hi_cme, col sep=comma]{IMG_apr26/winner_err_T3.csv};
\addplot[colorCME, fill opacity=0.15, draw=none, forget plot]
    fill between[of=pathlo14 and pathhi14];
\addplot[colorLS,  mark=*, mark size=2pt, line width=1.2pt]
    table[x=N, y=mean_relerr_poly, col sep=comma]{IMG_apr26/winner_err_T3.csv};
\addplot[colorCME, mark=square*, mark size=2pt, line width=1.2pt, dashed]
    table[x=N, y=mean_relerr_cme,  col sep=comma]{IMG_apr26/winner_err_T3.csv};
\end{axis}
\end{tikzpicture}
&
\begin{tikzpicture}
\begin{axis}[title={$T=2$}, xlabel={$n$},
    xmode=log, xmin=100, xmax=100000,
    xtick={100,1000,10000,100000}, xticklabels={$10^2$,$10^3$,$10^4$,$10^5$},
    minor xtick={}, minor ytick={}, ymin=0, ymax=0.45,
    width=6cm, height=5cm, grid=both,
    legend pos=north east, legend style={font=\small,draw=none},
    tick label style={font=\small}, label style={font=\small},
    title style={font=\small\bfseries}]
\addplot[colorLS, opacity=0.0, forget plot, name path=pathlo15]
    table[x=N, y=lo_poly, col sep=comma]{IMG_apr26/winner_err_T4.csv};
\addplot[colorLS, opacity=0.0, forget plot, name path=pathhi15]
    table[x=N, y=hi_poly, col sep=comma]{IMG_apr26/winner_err_T4.csv};
\addplot[colorLS, fill opacity=0.15, draw=none, forget plot]
    fill between[of=pathlo15 and pathhi15];
\addplot[colorCME, opacity=0.0, forget plot, name path=pathlo16]
    table[x=N, y=lo_cme, col sep=comma]{IMG_apr26/winner_err_T4.csv};
\addplot[colorCME, opacity=0.0, forget plot, name path=pathhi16]
    table[x=N, y=hi_cme, col sep=comma]{IMG_apr26/winner_err_T4.csv};
\addplot[colorCME, fill opacity=0.15, draw=none, forget plot]
    fill between[of=pathlo16 and pathhi16];
\addplot[colorLS,  mark=*, mark size=2pt, line width=1.2pt]
    table[x=N, y=mean_relerr_poly, col sep=comma]{IMG_apr26/winner_err_T4.csv};
\addplot[colorCME, mark=square*, mark size=2pt, line width=1.2pt, dashed]
    table[x=N, y=mean_relerr_cme,  col sep=comma]{IMG_apr26/winner_err_T4.csv};
\end{axis}
\end{tikzpicture}
\\[6pt]
\multicolumn{2}{c}{%
\begin{tikzpicture}
\draw[colorLS, line width=1.2pt] (0,0) -- (0.6,0);
\fill[colorLS] (0.3,0) circle (2pt);
\node[right] at (0.6,0) {\small LS};
\draw[colorCME, line width=1.2pt, dashed] (3.2,0) -- (3.8,0);
\fill[colorCME] (3.5,0) circle (2pt);
\node[right] at (3.8,0) {\small CME-LR};
\end{tikzpicture}}
\end{tabular}
\caption{Mean relative implied volatility error as a function of $n$, for LS (blue solid) and CME-LR (red dashed). Shaded bands: 95\% confidence intervals over 100 replications.}
\label{fig:error_agg}
\end{figure}


\begin{figure}[p]
\centering
\begin{tabular}{cc}
\begin{tikzpicture}
\begin{axis}[title={LS, $T=1/12$}, xlabel={$\log(K/S_0)$}, ylabel={Mean relative error},
    ymin=0, ymax=0.8, minor ytick={}, minor xtick={},
    width=6cm, height=5cm, grid=both,
    tick label style={font=\small}, label style={font=\small}, title style={font=\small\bfseries}]
\addplot[cNA, opacity=0.0, forget plot, name path=pathlo17]
    table[x=logmoneyness, y=lo_poly, col sep=comma]{IMG_apr26/error_mk_winner_T1_N1.csv};
\addplot[cNA, opacity=0.0, forget plot, name path=pathhi17]
    table[x=logmoneyness, y=hi_poly, col sep=comma]{IMG_apr26/error_mk_winner_T1_N1.csv};
\addplot[cNA, fill opacity=0.15, draw=none, forget plot]
    fill between[of=pathlo17 and pathhi17];
\addplot[cNB, opacity=0.0, forget plot, name path=pathlo18]
    table[x=logmoneyness, y=lo_poly, col sep=comma]{IMG_apr26/error_mk_winner_T1_N2.csv};
\addplot[cNB, opacity=0.0, forget plot, name path=pathhi18]
    table[x=logmoneyness, y=hi_poly, col sep=comma]{IMG_apr26/error_mk_winner_T1_N2.csv};
\addplot[cNB, fill opacity=0.15, draw=none, forget plot]
    fill between[of=pathlo18 and pathhi18];
\addplot[cNC, opacity=0.0, forget plot, name path=pathlo19]
    table[x=logmoneyness, y=lo_poly, col sep=comma]{IMG_apr26/error_mk_winner_T1_N3.csv};
\addplot[cNC, opacity=0.0, forget plot, name path=pathhi19]
    table[x=logmoneyness, y=hi_poly, col sep=comma]{IMG_apr26/error_mk_winner_T1_N3.csv};
\addplot[cNC, fill opacity=0.15, draw=none, forget plot]
    fill between[of=pathlo19 and pathhi19];
\addplot[cND, opacity=0.0, forget plot, name path=pathlo20]
    table[x=logmoneyness, y=lo_poly, col sep=comma]{IMG_apr26/error_mk_winner_T1_N4.csv};
\addplot[cND, opacity=0.0, forget plot, name path=pathhi20]
    table[x=logmoneyness, y=hi_poly, col sep=comma]{IMG_apr26/error_mk_winner_T1_N4.csv};
\addplot[cND, fill opacity=0.15, draw=none, forget plot]
    fill between[of=pathlo20 and pathhi20];
\addplot[cNA, mark=*,        mark size=1.5pt, line width=1pt]
    table[x=logmoneyness, y=mean_relerr_poly, col sep=comma]{IMG_apr26/error_mk_winner_T1_N1.csv};
\addplot[cNB, mark=square*,  mark size=1.5pt, line width=1pt]
    table[x=logmoneyness, y=mean_relerr_poly, col sep=comma]{IMG_apr26/error_mk_winner_T1_N2.csv};
\addplot[cNC, mark=triangle*, mark size=1.5pt, line width=1pt]
    table[x=logmoneyness, y=mean_relerr_poly, col sep=comma]{IMG_apr26/error_mk_winner_T1_N3.csv};
\addplot[cND, mark=diamond*,  mark size=1.5pt, line width=1pt]
    table[x=logmoneyness, y=mean_relerr_poly, col sep=comma]{IMG_apr26/error_mk_winner_T1_N4.csv};
\end{axis}
\end{tikzpicture}
&\hspace{0.35cm}
\begin{tikzpicture}
\begin{axis}[title={CME-LR, $T=1/12$}, xlabel={$\log(K/S_0)$},
    ymin=0, ymax=0.8, minor ytick={}, minor xtick={},
    width=6cm, height=5cm, grid=both,
    tick label style={font=\small}, label style={font=\small}, title style={font=\small\bfseries}]
\addplot[cNA, opacity=0.0, forget plot, name path=pathlo21]
    table[x=logmoneyness, y=lo_cme, col sep=comma]{IMG_apr26/error_mk_winner_T1_N1.csv};
\addplot[cNA, opacity=0.0, forget plot, name path=pathhi21]
    table[x=logmoneyness, y=hi_cme, col sep=comma]{IMG_apr26/error_mk_winner_T1_N1.csv};
\addplot[cNA, fill opacity=0.15, draw=none, forget plot]
    fill between[of=pathlo21 and pathhi21];
\addplot[cNB, opacity=0.0, forget plot, name path=pathlo22]
    table[x=logmoneyness, y=lo_cme, col sep=comma]{IMG_apr26/error_mk_winner_T1_N2.csv};
\addplot[cNB, opacity=0.0, forget plot, name path=pathhi22]
    table[x=logmoneyness, y=hi_cme, col sep=comma]{IMG_apr26/error_mk_winner_T1_N2.csv};
\addplot[cNB, fill opacity=0.15, draw=none, forget plot]
    fill between[of=pathlo22 and pathhi22];
\addplot[cNC, opacity=0.0, forget plot, name path=pathlo23]
    table[x=logmoneyness, y=lo_cme, col sep=comma]{IMG_apr26/error_mk_winner_T1_N3.csv};
\addplot[cNC, opacity=0.0, forget plot, name path=pathhi23]
    table[x=logmoneyness, y=hi_cme, col sep=comma]{IMG_apr26/error_mk_winner_T1_N3.csv};
\addplot[cNC, fill opacity=0.15, draw=none, forget plot]
    fill between[of=pathlo23 and pathhi23];
\addplot[cND, opacity=0.0, forget plot, name path=pathlo24]
    table[x=logmoneyness, y=lo_cme, col sep=comma]{IMG_apr26/error_mk_winner_T1_N4.csv};
\addplot[cND, opacity=0.0, forget plot, name path=pathhi24]
    table[x=logmoneyness, y=hi_cme, col sep=comma]{IMG_apr26/error_mk_winner_T1_N4.csv};
\addplot[cND, fill opacity=0.15, draw=none, forget plot]
    fill between[of=pathlo24 and pathhi24];
\addplot[cNA, mark=*,        mark size=1.5pt, line width=1pt, dashed]
    table[x=logmoneyness, y=mean_relerr_cme, col sep=comma]{IMG_apr26/error_mk_winner_T1_N1.csv};
\addplot[cNB, mark=square*,  mark size=1.5pt, line width=1pt, dashed]
    table[x=logmoneyness, y=mean_relerr_cme, col sep=comma]{IMG_apr26/error_mk_winner_T1_N2.csv};
\addplot[cNC, mark=triangle*, mark size=1.5pt, line width=1pt, dashed]
    table[x=logmoneyness, y=mean_relerr_cme, col sep=comma]{IMG_apr26/error_mk_winner_T1_N3.csv};
\addplot[cND, mark=diamond*,  mark size=1.5pt, line width=1pt, dashed]
    table[x=logmoneyness, y=mean_relerr_cme, col sep=comma]{IMG_apr26/error_mk_winner_T1_N4.csv};
\end{axis}
\end{tikzpicture}
\\[4pt]
\begin{tikzpicture}
\begin{axis}[title={LS, $T=1/2$}, xlabel={$\log(K/S_0)$}, ylabel={Mean relative error},
    ymin=0, ymax=0.8, minor ytick={}, minor xtick={},
    width=6cm, height=5cm, grid=both,
    tick label style={font=\small}, label style={font=\small}, title style={font=\small\bfseries}]
\addplot[cNA, opacity=0.0, forget plot, name path=pathlo25]
    table[x=logmoneyness, y=lo_poly, col sep=comma]{IMG_apr26/error_mk_winner_T2_N1.csv};
\addplot[cNA, opacity=0.0, forget plot, name path=pathhi25]
    table[x=logmoneyness, y=hi_poly, col sep=comma]{IMG_apr26/error_mk_winner_T2_N1.csv};
\addplot[cNA, fill opacity=0.15, draw=none, forget plot]
    fill between[of=pathlo25 and pathhi25];
\addplot[cNB, opacity=0.0, forget plot, name path=pathlo26]
    table[x=logmoneyness, y=lo_poly, col sep=comma]{IMG_apr26/error_mk_winner_T2_N2.csv};
\addplot[cNB, opacity=0.0, forget plot, name path=pathhi26]
    table[x=logmoneyness, y=hi_poly, col sep=comma]{IMG_apr26/error_mk_winner_T2_N2.csv};
\addplot[cNB, fill opacity=0.15, draw=none, forget plot]
    fill between[of=pathlo26 and pathhi26];
\addplot[cNC, opacity=0.0, forget plot, name path=pathlo27]
    table[x=logmoneyness, y=lo_poly, col sep=comma]{IMG_apr26/error_mk_winner_T2_N3.csv};
\addplot[cNC, opacity=0.0, forget plot, name path=pathhi27]
    table[x=logmoneyness, y=hi_poly, col sep=comma]{IMG_apr26/error_mk_winner_T2_N3.csv};
\addplot[cNC, fill opacity=0.15, draw=none, forget plot]
    fill between[of=pathlo27 and pathhi27];
\addplot[cND, opacity=0.0, forget plot, name path=pathlo28]
    table[x=logmoneyness, y=lo_poly, col sep=comma]{IMG_apr26/error_mk_winner_T2_N4.csv};
\addplot[cND, opacity=0.0, forget plot, name path=pathhi28]
    table[x=logmoneyness, y=hi_poly, col sep=comma]{IMG_apr26/error_mk_winner_T2_N4.csv};
\addplot[cND, fill opacity=0.15, draw=none, forget plot]
    fill between[of=pathlo28 and pathhi28];
\addplot[cNA, mark=*,        mark size=1.5pt, line width=1pt]
    table[x=logmoneyness, y=mean_relerr_poly, col sep=comma]{IMG_apr26/error_mk_winner_T2_N1.csv};
\addplot[cNB, mark=square*,  mark size=1.5pt, line width=1pt]
    table[x=logmoneyness, y=mean_relerr_poly, col sep=comma]{IMG_apr26/error_mk_winner_T2_N2.csv};
\addplot[cNC, mark=triangle*, mark size=1.5pt, line width=1pt]
    table[x=logmoneyness, y=mean_relerr_poly, col sep=comma]{IMG_apr26/error_mk_winner_T2_N3.csv};
\addplot[cND, mark=diamond*,  mark size=1.5pt, line width=1pt]
    table[x=logmoneyness, y=mean_relerr_poly, col sep=comma]{IMG_apr26/error_mk_winner_T2_N4.csv};
\end{axis}
\end{tikzpicture}
&\hspace{0.35cm}
\begin{tikzpicture}
\begin{axis}[title={CME-LR, $T=1/2$}, xlabel={$\log(K/S_0)$},
    ymin=0, ymax=0.8, minor ytick={}, minor xtick={},
    width=6cm, height=5cm, grid=both,
    tick label style={font=\small}, label style={font=\small}, title style={font=\small\bfseries}]
\addplot[cNA, opacity=0.0, forget plot, name path=pathlo29]
    table[x=logmoneyness, y=lo_cme, col sep=comma]{IMG_apr26/error_mk_winner_T2_N1.csv};
\addplot[cNA, opacity=0.0, forget plot, name path=pathhi29]
    table[x=logmoneyness, y=hi_cme, col sep=comma]{IMG_apr26/error_mk_winner_T2_N1.csv};
\addplot[cNA, fill opacity=0.15, draw=none, forget plot]
    fill between[of=pathlo29 and pathhi29];
\addplot[cNB, opacity=0.0, forget plot, name path=pathlo30]
    table[x=logmoneyness, y=lo_cme, col sep=comma]{IMG_apr26/error_mk_winner_T2_N2.csv};
\addplot[cNB, opacity=0.0, forget plot, name path=pathhi30]
    table[x=logmoneyness, y=hi_cme, col sep=comma]{IMG_apr26/error_mk_winner_T2_N2.csv};
\addplot[cNB, fill opacity=0.15, draw=none, forget plot]
    fill between[of=pathlo30 and pathhi30];
\addplot[cNC, opacity=0.0, forget plot, name path=pathlo31]
    table[x=logmoneyness, y=lo_cme, col sep=comma]{IMG_apr26/error_mk_winner_T2_N3.csv};
\addplot[cNC, opacity=0.0, forget plot, name path=pathhi31]
    table[x=logmoneyness, y=hi_cme, col sep=comma]{IMG_apr26/error_mk_winner_T2_N3.csv};
\addplot[cNC, fill opacity=0.15, draw=none, forget plot]
    fill between[of=pathlo31 and pathhi31];
\addplot[cND, opacity=0.0, forget plot, name path=pathlo32]
    table[x=logmoneyness, y=lo_cme, col sep=comma]{IMG_apr26/error_mk_winner_T2_N4.csv};
\addplot[cND, opacity=0.0, forget plot, name path=pathhi32]
    table[x=logmoneyness, y=hi_cme, col sep=comma]{IMG_apr26/error_mk_winner_T2_N4.csv};
\addplot[cND, fill opacity=0.15, draw=none, forget plot]
    fill between[of=pathlo32 and pathhi32];
\addplot[cNA, mark=*,        mark size=1.5pt, line width=1pt, dashed]
    table[x=logmoneyness, y=mean_relerr_cme, col sep=comma]{IMG_apr26/error_mk_winner_T2_N1.csv};
\addplot[cNB, mark=square*,  mark size=1.5pt, line width=1pt, dashed]
    table[x=logmoneyness, y=mean_relerr_cme, col sep=comma]{IMG_apr26/error_mk_winner_T2_N2.csv};
\addplot[cNC, mark=triangle*, mark size=1.5pt, line width=1pt, dashed]
    table[x=logmoneyness, y=mean_relerr_cme, col sep=comma]{IMG_apr26/error_mk_winner_T2_N3.csv};
\addplot[cND, mark=diamond*,  mark size=1.5pt, line width=1pt, dashed]
    table[x=logmoneyness, y=mean_relerr_cme, col sep=comma]{IMG_apr26/error_mk_winner_T2_N4.csv};
\end{axis}
\end{tikzpicture}
\\[4pt]
\begin{tikzpicture}
\begin{axis}[title={LS, $T=1$}, xlabel={$\log(K/S_0)$}, ylabel={Mean relative error},
    ymin=0, ymax=0.8, minor ytick={}, minor xtick={},
    width=6cm, height=5cm, grid=both,
    tick label style={font=\small}, label style={font=\small}, title style={font=\small\bfseries}]
\addplot[cNA, opacity=0.0, forget plot, name path=pathlo33]
    table[x=logmoneyness, y=lo_poly, col sep=comma]{IMG_apr26/error_mk_winner_T3_N1.csv};
\addplot[cNA, opacity=0.0, forget plot, name path=pathhi33]
    table[x=logmoneyness, y=hi_poly, col sep=comma]{IMG_apr26/error_mk_winner_T3_N1.csv};
\addplot[cNA, fill opacity=0.15, draw=none, forget plot]
    fill between[of=pathlo33 and pathhi33];
\addplot[cNB, opacity=0.0, forget plot, name path=pathlo34]
    table[x=logmoneyness, y=lo_poly, col sep=comma]{IMG_apr26/error_mk_winner_T3_N2.csv};
\addplot[cNB, opacity=0.0, forget plot, name path=pathhi34]
    table[x=logmoneyness, y=hi_poly, col sep=comma]{IMG_apr26/error_mk_winner_T3_N2.csv};
\addplot[cNB, fill opacity=0.15, draw=none, forget plot]
    fill between[of=pathlo34 and pathhi34];
\addplot[cNC, opacity=0.0, forget plot, name path=pathlo35]
    table[x=logmoneyness, y=lo_poly, col sep=comma]{IMG_apr26/error_mk_winner_T3_N3.csv};
\addplot[cNC, opacity=0.0, forget plot, name path=pathhi35]
    table[x=logmoneyness, y=hi_poly, col sep=comma]{IMG_apr26/error_mk_winner_T3_N3.csv};
\addplot[cNC, fill opacity=0.15, draw=none, forget plot]
    fill between[of=pathlo35 and pathhi35];
\addplot[cND, opacity=0.0, forget plot, name path=pathlo36]
    table[x=logmoneyness, y=lo_poly, col sep=comma]{IMG_apr26/error_mk_winner_T3_N4.csv};
\addplot[cND, opacity=0.0, forget plot, name path=pathhi36]
    table[x=logmoneyness, y=hi_poly, col sep=comma]{IMG_apr26/error_mk_winner_T3_N4.csv};
\addplot[cND, fill opacity=0.15, draw=none, forget plot]
    fill between[of=pathlo36 and pathhi36];
\addplot[cNA, mark=*,        mark size=1.5pt, line width=1pt]
    table[x=logmoneyness, y=mean_relerr_poly, col sep=comma]{IMG_apr26/error_mk_winner_T3_N1.csv};
\addplot[cNB, mark=square*,  mark size=1.5pt, line width=1pt]
    table[x=logmoneyness, y=mean_relerr_poly, col sep=comma]{IMG_apr26/error_mk_winner_T3_N2.csv};
\addplot[cNC, mark=triangle*, mark size=1.5pt, line width=1pt]
    table[x=logmoneyness, y=mean_relerr_poly, col sep=comma]{IMG_apr26/error_mk_winner_T3_N3.csv};
\addplot[cND, mark=diamond*,  mark size=1.5pt, line width=1pt]
    table[x=logmoneyness, y=mean_relerr_poly, col sep=comma]{IMG_apr26/error_mk_winner_T3_N4.csv};
\end{axis}
\end{tikzpicture}
&\hspace{0.35cm}
\begin{tikzpicture}
\begin{axis}[title={CME-LR, $T=1$}, xlabel={$\log(K/S_0)$},
    ymin=0, ymax=0.8, minor ytick={}, minor xtick={},
    width=6cm, height=5cm, grid=both,
    tick label style={font=\small}, label style={font=\small}, title style={font=\small\bfseries}]
\addplot[cNA, opacity=0.0, forget plot, name path=pathlo37]
    table[x=logmoneyness, y=lo_cme, col sep=comma]{IMG_apr26/error_mk_winner_T3_N1.csv};
\addplot[cNA, opacity=0.0, forget plot, name path=pathhi37]
    table[x=logmoneyness, y=hi_cme, col sep=comma]{IMG_apr26/error_mk_winner_T3_N1.csv};
\addplot[cNA, fill opacity=0.15, draw=none, forget plot]
    fill between[of=pathlo37 and pathhi37];
\addplot[cNB, opacity=0.0, forget plot, name path=pathlo38]
    table[x=logmoneyness, y=lo_cme, col sep=comma]{IMG_apr26/error_mk_winner_T3_N2.csv};
\addplot[cNB, opacity=0.0, forget plot, name path=pathhi38]
    table[x=logmoneyness, y=hi_cme, col sep=comma]{IMG_apr26/error_mk_winner_T3_N2.csv};
\addplot[cNB, fill opacity=0.15, draw=none, forget plot]
    fill between[of=pathlo38 and pathhi38];
\addplot[cNC, opacity=0.0, forget plot, name path=pathlo39]
    table[x=logmoneyness, y=lo_cme, col sep=comma]{IMG_apr26/error_mk_winner_T3_N3.csv};
\addplot[cNC, opacity=0.0, forget plot, name path=pathhi39]
    table[x=logmoneyness, y=hi_cme, col sep=comma]{IMG_apr26/error_mk_winner_T3_N3.csv};
\addplot[cNC, fill opacity=0.15, draw=none, forget plot]
    fill between[of=pathlo39 and pathhi39];
\addplot[cND, opacity=0.0, forget plot, name path=pathlo40]
    table[x=logmoneyness, y=lo_cme, col sep=comma]{IMG_apr26/error_mk_winner_T3_N4.csv};
\addplot[cND, opacity=0.0, forget plot, name path=pathhi40]
    table[x=logmoneyness, y=hi_cme, col sep=comma]{IMG_apr26/error_mk_winner_T3_N4.csv};
\addplot[cND, fill opacity=0.15, draw=none, forget plot]
    fill between[of=pathlo40 and pathhi40];
\addplot[cNA, mark=*,        mark size=1.5pt, line width=1pt, dashed]
    table[x=logmoneyness, y=mean_relerr_cme, col sep=comma]{IMG_apr26/error_mk_winner_T3_N1.csv};
\addplot[cNB, mark=square*,  mark size=1.5pt, line width=1pt, dashed]
    table[x=logmoneyness, y=mean_relerr_cme, col sep=comma]{IMG_apr26/error_mk_winner_T3_N2.csv};
\addplot[cNC, mark=triangle*, mark size=1.5pt, line width=1pt, dashed]
    table[x=logmoneyness, y=mean_relerr_cme, col sep=comma]{IMG_apr26/error_mk_winner_T3_N3.csv};
\addplot[cND, mark=diamond*,  mark size=1.5pt, line width=1pt, dashed]
    table[x=logmoneyness, y=mean_relerr_cme, col sep=comma]{IMG_apr26/error_mk_winner_T3_N4.csv};
\end{axis}
\end{tikzpicture}
\\
\begin{tikzpicture}
\begin{axis}[title={LS, $T=2$}, xlabel={$\log(K/S_0)$}, ylabel={Mean relative error},
    ymin=0, ymax=0.8, minor ytick={}, minor xtick={},
    width=6cm, height=5cm, grid=both,
    tick label style={font=\small}, label style={font=\small}, title style={font=\small\bfseries}]
\addplot[cNA, opacity=0.0, forget plot, name path=pathlo41]
    table[x=logmoneyness, y=lo_poly, col sep=comma]{IMG_apr26/error_mk_winner_T4_N1.csv};
\addplot[cNA, opacity=0.0, forget plot, name path=pathhi41]
    table[x=logmoneyness, y=hi_poly, col sep=comma]{IMG_apr26/error_mk_winner_T4_N1.csv};
\addplot[cNA, fill opacity=0.15, draw=none, forget plot]
    fill between[of=pathlo41 and pathhi41];
\addplot[cNB, opacity=0.0, forget plot, name path=pathlo42]
    table[x=logmoneyness, y=lo_poly, col sep=comma]{IMG_apr26/error_mk_winner_T4_N2.csv};
\addplot[cNB, opacity=0.0, forget plot, name path=pathhi42]
    table[x=logmoneyness, y=hi_poly, col sep=comma]{IMG_apr26/error_mk_winner_T4_N2.csv};
\addplot[cNB, fill opacity=0.15, draw=none, forget plot]
    fill between[of=pathlo42 and pathhi42];
\addplot[cNC, opacity=0.0, forget plot, name path=pathlo43]
    table[x=logmoneyness, y=lo_poly, col sep=comma]{IMG_apr26/error_mk_winner_T4_N3.csv};
\addplot[cNC, opacity=0.0, forget plot, name path=pathhi43]
    table[x=logmoneyness, y=hi_poly, col sep=comma]{IMG_apr26/error_mk_winner_T4_N3.csv};
\addplot[cNC, fill opacity=0.15, draw=none, forget plot]
    fill between[of=pathlo43 and pathhi43];
\addplot[cND, opacity=0.0, forget plot, name path=pathlo44]
    table[x=logmoneyness, y=lo_poly, col sep=comma]{IMG_apr26/error_mk_winner_T4_N4.csv};
\addplot[cND, opacity=0.0, forget plot, name path=pathhi44]
    table[x=logmoneyness, y=hi_poly, col sep=comma]{IMG_apr26/error_mk_winner_T4_N4.csv};
\addplot[cND, fill opacity=0.15, draw=none, forget plot]
    fill between[of=pathlo44 and pathhi44];
\addplot[cNA, mark=*,        mark size=1.5pt, line width=1pt]
    table[x=logmoneyness, y=mean_relerr_poly, col sep=comma]{IMG_apr26/error_mk_winner_T4_N1.csv};
\addplot[cNB, mark=square*,  mark size=1.5pt, line width=1pt]
    table[x=logmoneyness, y=mean_relerr_poly, col sep=comma]{IMG_apr26/error_mk_winner_T4_N2.csv};
\addplot[cNC, mark=triangle*, mark size=1.5pt, line width=1pt]
    table[x=logmoneyness, y=mean_relerr_poly, col sep=comma]{IMG_apr26/error_mk_winner_T4_N3.csv};
\addplot[cND, mark=diamond*,  mark size=1.5pt, line width=1pt]
    table[x=logmoneyness, y=mean_relerr_poly, col sep=comma]{IMG_apr26/error_mk_winner_T4_N4.csv};
\end{axis}
\end{tikzpicture}
&\hspace{0.35cm}
\begin{tikzpicture}
\begin{axis}[title={CME-LR, $T=2$}, xlabel={$\log(K/S_0)$},
    ymin=0, ymax=0.8, minor ytick={}, minor xtick={},
    width=6cm, height=5cm, grid=both,
    tick label style={font=\small}, label style={font=\small}, title style={font=\small\bfseries}]
\addplot[cNA, opacity=0.0, forget plot, name path=pathlo45]
    table[x=logmoneyness, y=lo_cme, col sep=comma]{IMG_apr26/error_mk_winner_T4_N1.csv};
\addplot[cNA, opacity=0.0, forget plot, name path=pathhi45]
    table[x=logmoneyness, y=hi_cme, col sep=comma]{IMG_apr26/error_mk_winner_T4_N1.csv};
\addplot[cNA, fill opacity=0.15, draw=none, forget plot]
    fill between[of=pathlo45 and pathhi45];
\addplot[cNB, opacity=0.0, forget plot, name path=pathlo46]
    table[x=logmoneyness, y=lo_cme, col sep=comma]{IMG_apr26/error_mk_winner_T4_N2.csv};
\addplot[cNB, opacity=0.0, forget plot, name path=pathhi46]
    table[x=logmoneyness, y=hi_cme, col sep=comma]{IMG_apr26/error_mk_winner_T4_N2.csv};
\addplot[cNB, fill opacity=0.15, draw=none, forget plot]
    fill between[of=pathlo46 and pathhi46];
\addplot[cNC, opacity=0.0, forget plot, name path=pathlo47]
    table[x=logmoneyness, y=lo_cme, col sep=comma]{IMG_apr26/error_mk_winner_T4_N3.csv};
\addplot[cNC, opacity=0.0, forget plot, name path=pathhi47]
    table[x=logmoneyness, y=hi_cme, col sep=comma]{IMG_apr26/error_mk_winner_T4_N3.csv};
\addplot[cNC, fill opacity=0.15, draw=none, forget plot]
    fill between[of=pathlo47 and pathhi47];
\addplot[cND, opacity=0.0, forget plot, name path=pathlo48]
    table[x=logmoneyness, y=lo_cme, col sep=comma]{IMG_apr26/error_mk_winner_T4_N4.csv};
\addplot[cND, opacity=0.0, forget plot, name path=pathhi48]
    table[x=logmoneyness, y=hi_cme, col sep=comma]{IMG_apr26/error_mk_winner_T4_N4.csv};
\addplot[cND, fill opacity=0.15, draw=none, forget plot]
    fill between[of=pathlo48 and pathhi48];
\addplot[cNA, mark=*,        mark size=1.5pt, line width=1pt, dashed]
    table[x=logmoneyness, y=mean_relerr_cme, col sep=comma]{IMG_apr26/error_mk_winner_T4_N1.csv};
\addplot[cNB, mark=square*,  mark size=1.5pt, line width=1pt, dashed]
    table[x=logmoneyness, y=mean_relerr_cme, col sep=comma]{IMG_apr26/error_mk_winner_T4_N2.csv};
\addplot[cNC, mark=triangle*, mark size=1.5pt, line width=1pt, dashed]
    table[x=logmoneyness, y=mean_relerr_cme, col sep=comma]{IMG_apr26/error_mk_winner_T4_N3.csv};
\addplot[cND, mark=diamond*,  mark size=1.5pt, line width=1pt, dashed]
    table[x=logmoneyness, y=mean_relerr_cme, col sep=comma]{IMG_apr26/error_mk_winner_T4_N4.csv};
\end{axis}
\end{tikzpicture}
\\
\multicolumn{2}{c}{%
\begin{tikzpicture}
\draw[cNA, line width=1pt] (0,0) -- (0.5,0);
\fill[cNA] (0.25,0) circle (1.5pt);
\node[right] at (0.5,0) {\small $n=10^2$};
\draw[cNB, line width=1pt] (2.8,0) -- (3.3,0);
\fill[cNB] (3.05,0) circle (1.5pt) node[draw=none,inner sep=0]{};
\node[right] at (3.3,0) {\small $n=10^3$};
\draw[cNC, line width=1pt] (5.6,0) -- (6.1,0);
\fill[cNC] (5.85,0) circle (1.5pt);
\node[right] at (6.1,0) {\small $n=10^4$};
\draw[cND, line width=1pt] (8.4,0) -- (8.9,0);
\fill[cND] (8.65,0) circle (1.5pt);
\node[right] at (8.9,0) {\small $n=10^5$};
\end{tikzpicture}}
\end{tabular}
\caption{Mean relative implied volatility error vs.\ $\log(K/S_0)$. 
  Left: LS (solid). Right: CME-LR offline (dashed).
Shaded bands: 95\% confidence intervals over 100 replications.
}
\label{fig:moneyness}
\end{figure}
For the  simulation grid, we consider all combinations of the following parameters:
number of simulation paths $n \in \{10^2, 10^3, 10^4, 10^5\}$,
maturities
$T \in \{1/12,\, 1/2,\, 1,\, 2\}$ years,
and strikes $K = S_0 \exp(m \sqrt{\nu_0} \sqrt{T})$
with $m \in \operatorname{linspace}(-2, 2, 10)$.
Each configuration is repeated over $100$ independent replications,
with seeds $\texttt{seed} = \texttt{rep} \times 16 + n_i \times 4 + t_i$
to ensure reproducibility.
{The benchmark American put price is computed once via the \textsc{Matlab}
finite-difference solver \texttt{optByHestonFD}, while the CME-LR and LS
pricing algorithms are implemented in \textsc{C++}. Computation times are
measured around the full pricing algorithm, excluding path generation and data
preparation.}

Performance is measured via the mean relative implied volatility error,
averaged over the $10$ strikes and the $100$ replications:
\begin{equation}\label{eq:err_IV}
\overline{\varepsilon}_{\mathrm{rel}}(n, T)
\;=\;
\frac{1}{100} \sum_{r=1}^{100}
\frac{1}{10} \sum_{k=1}^{10}
\frac{\bigl|\widetilde{\mathrm{IV}}^{(r)}_k(n,T) 
- \mathrm{IV}^{\mathrm{ref}}_k(T)\bigr|}
     {\mathrm{IV}^{\mathrm{ref}}_k(T)},
\end{equation}
where $\widetilde{\mathrm{IV}}^{(r)}_k$ denotes the implied volatility
from the simulated price in replication $r$ and strike $k$,
and $\mathrm{IV}^{\mathrm{ref}}_k$ is the reference implied volatility
from the \texttt{optByHestonFD} benchmark.
Here, the implied volatility $\widetilde{\mathrm{IV}}^{(r)}_k$ is obtained by 
numerically inverting the Black-Scholes European put pricing formula applied 
to the simulated American put price, and analogously 
$\mathrm{IV}^{\mathrm{ref}}_k$ is obtained from the benchmark price. 

Figure~\ref{fig:timing} reports the mean $\log_{10}$ computation time as a
function of $n$ for both methods across the four maturities,
over the $100$ replications.
CME-LR is systematically faster than LS for all values of $n$ and $T$,
with the gap being most pronounced for small $n$. This speed-up is likely
induced from the low-rank basis being particularly parsimonious for the model
in question, and the computational savings from not repeatedly recomputing the
coefficients of the conventional LS algorithm.

To illustrate how parsimonious the low-rank basis is, Figure~\ref{fig:rank}
displays the mean rank of $\bm{K}_Y$ and $\bm{K}_X$ selected 
by the pivoted Cholesky decomposition as a function of $n$, for three values of
the tolerance $\varepsilon \in \{10^{-4}, 10^{-5}, 10^{-6}\}$. While all pricing
results reported in this section are obtained with $\varepsilon = 10^{-5}$, we
include the other two tolerances for comparison. As expected, a smaller
$\varepsilon$ leads to a higher rank, reflecting a more accurate approximation
of the kernel matrix. We do not separately plot the rank of $\bm{K}_X$, as it
remains small across all configurations and tolerances, taking values in
$\{2,3\}$ for $\varepsilon \in \{10^{-4}, 10^{-5}\}$ and between $3$ and $6$ for
$\varepsilon = 10^{-6}$. This pronounced rank deficiency is consistent with the
strongly negative correlation $\rho = -0.7$ used in our Heston configuration,
a feature reminiscent of equity index markets such as the S\&P~500, in which the
leverage effect manifests as a marked implied volatility skew. Such a high
negative correlation effectively concentrates the joint distribution of
$(\log S_k, \nu_k)$ near a one-dimensional affine subspace, which translates into
a low-dimensional linear manifold within the polynomial feature space and is
captured by the pivoted Cholesky decomposition through the low-rank
representation of $\bm{K}_X$.
In contrast, the rank of $\bm{K}_Y$ grows with $n$ for all tolerances, confirming
that the low-rank approximation adapts to the complexity of the transition kernel
as more data become available.

Figure~\ref{fig:error_agg} shows the mean relative implied volatility error 
$\overline{\varepsilon}_{\mathrm{rel}}(n, T)$ in Equation~\eqref{eq:err_IV} 
averaged over strikes as a function of $n$.
CME-LR achieves uniformly lower error than LS across all maturities.

Finally, Figure~\ref{fig:moneyness} shows the per-strike version of the same 
error \eqref{eq:err_IV}, averaged over replications only. In particular, 
the error is displayed as a function of log-moneyness
$\log(K/S_0)$ for each maturity and value of $n$.
CME-LR achieves lower error in the out-of-the-money region
(positive log-moneyness), while LS performs better for in-the-money options
(negative log-moneyness).

\section{Conclusion}\label{sec:conclusion}
We have developed a kernel-based framework for American option pricing that
exploits the offline-online decomposition enabled by the conditional mean
embedding approach. By treating continuation value estimation as an operator
learning problem, the method avoids refitting a regression model at each
exercise date, leading to systematic computational gains over the
Longstaff-Schwartz baseline.

The low-rank approximation via pivoted Cholesky decomposition keeps the
computational cost tractable while maintaining controlled approximation error,
as established by the theoretical bounds derived in
Sections~\ref{sec:conv_rate} and~\ref{sec:offline_cme}.
In particular, the total pricing error grows at most linearly in the number
of exercise dates, with a proportionality constant that decreases as the
sample size and Cholesky tolerance improve.

Numerical experiments on the Heston model confirm that CME-LR achieves
uniformly lower mean relative implied volatility error than Longstaff-Schwartz
across all maturities and sample sizes, with a complementary moneyness
profile: CME-LR is more accurate in the out-of-the-money region, while
Longstaff-Schwartz retains an advantage for in-the-money options. The rank
of the kernel matrix $\bm{K}_Y$ grows sub-linearly with $n$, confirming
that the low-rank structure adapts automatically to the data complexity.

\section*{Acknowledgments}
The authors thank the Swiss National Science Foundation (SNSF) for the financial
support through the grant number 215528, ``Large-scale kernel methods in
financial economics". C. Segala is a member of the Italian National Group of
Scientific Calculus (INdAM-GNCS, participant of the project E53C25002010001).

\appendix
\section{Proof of Theorem~\ref{thm:approx}}\label{app:proof_lowrank}
\begin{proof}
	The expression on the right-hand side of \eqref{eq:orthoproj} follows from
  applying matrix calculus. Furthermore, by orthogonality of the decomposition
  $\hat \mu - \hat \mu_{\tilde\Hcal}$ and $\hat \mu_{\tilde\Hcal} - \tilde \mu$,
  we have
	\begin{align*}
		\|\hat \mu -\tilde \mu\|_{\Hcal}^2 =\| 
    \hat \mu  - \hat \mu_{\tilde\Hcal}\|_{\Hcal}^2 
    + \| \hat \mu_{\tilde\Hcal}-\tilde \mu\|_{\Hcal}^2 =\| \hat \mu  
    - \hat \mu_{\tilde\Hcal}\|_{\Hcal}^2 
    + \big\| \bm F_{\tilde\Hcal}-\tilde{\bm F} \big\|_F^2.
	\end{align*}
	It remains to bound the first term on the right-hand side
	\begin{align*}
		\|\hat \mu- \hat \mu_{\tilde\Hcal}\|_\Hcal^2&=
		\big\|{\bs\Phi}_Y(\cdot)\bm F {\bs\Phi}_X(\cdot)^\T-
    \bs\Phi_Y (\cdot) \bm B_Y \bm V_{Y}  \ \bm F_{\tilde\Hcal} \
		\left(\bm B_X \bm V_{X} \right)^\T \bs\Phi _X(\cdot)^{\T}  \big\|_\Hcal^2\\
		&=\big\|{\bs\Phi}_Y(\cdot)(\bm F -\bm B_Y\bm L_{Y}^{\T}
    \bm F\bm L_{X}\bm B_X^{\T}) {\bs\Phi}_X(\cdot)^\T\big\|_\Hcal^2\\
		&=\trace \big ((\bm F -\bm B_Y\bm L_{Y}^{\T}\bm F\bm L_{X}
    \bm B_X^{\T})^{\T}\bm K_Y (\bm F -\bm B_Y\bm L_{Y}^{\T}
  \bm F\bm L_{X}\bm B_X^{\T})\bm K_X \big ) \\
		&=\trace \big (\bm F^{\T}\bm K_Y \bm F \bm K_X \big ) 
    -2\trace \big (\bm F ^{\T}\bm K_Y \bm B_Y\bm L_{Y}^{\T}
    \bm F\bm L_{X}\bm B_X^{\T} \bm K_X\big) \\
		&\qquad+\trace \big (\bm B_X\bm L_{X}^{\T}
    \bm F^{\T}\bm L_{Y}\bm B_Y^{\T} \bm K_Y \bm B_Y
    \bm L_{Y}^{\T}\bm F\bm L_{X}\bm B_X^{\T} \bm K_X\big)\\
		&=\trace \big (\bm F^{\T}\bm K_Y \bm F \bm K_X \big )
    -\trace \big (\bm F^{\T}\bm L_Y\bm L_Y^{\T} 
    \bm F \bm L_X \bm L_X^{\T} \big )\\
		&=\trace \big (\bm F^{\T}\bm K_Y \bm F (\bm K_X-\bm L_X\bm L_X^\T) \big)
		+\trace \big (\bm F^{\T}(\bm K_Y-\bm L_Y\bm L_Y^{\T})\bm F 
    \bm L_X \bm L_X^{\T} \big )\\
		&=\langle\bm K_Y, \bm F (\bm K_X-\bm L_X\bm L_X^\T)\bm F^\T\rangle_F
		+\langle\bm L_X \bm L_X^{\T},
    \bm F^\T(\bm K_Y-\bm L_Y\bm L_Y^{\T})\bm F\rangle_F
		\\
		&\leq\|{\bs F}\|_F^2\|{\bs K}_Y\|_F
		\|\bm K_X-\bm L_X\bm L_X^\T \|_F+\|{\bs F}\|_F^2\|\bm L_X \bm L_X^{\T}\|_F
		\|\bm K_Y-\bm L_Y\bm L_Y^\T \|_F\\
		&\leq \varepsilon \|{\bs F}\|_F^2\big(\trace({\bs K}_X)+\trace({\bs K}_Y)),
	\end{align*}
	where we used the Cauchy-Schwarz inequality for the Frobenius inner product, 
  the sub-multiplicativity of the Frobenius norm, the bound 
  \(\|{\bs A}\|_F\leq\trace{\bs A}\) for any positive semidefinite matrix 
  $\bs A$, and the inequality 
  \(\trace\bm L_X \bm L_X^{\T}\leq\trace({\bs K}_X)\).
\end{proof}

\section{Proof of Proposition \ref{prop:coeff_err}}\label{app:proof_coeff}
\begin{proof}
     As a first step of the proof, we aim to show that the two matrices
	\[
	\tilde{\bm F}
	=
	(\bm L_Y \bm V_Y)^\top (\bm L_X \bm V_X) 
  (\bm \Lambda_X + n \lambda \bm I_{m_X})^{-1},
	\quad
	\bar{\bm F}
	=
	(\bm L_Y \bm V_Y)^\top 
  (\bm L_X \bm L_X^\top + n \lambda \bm I_n)^{-1} \bm L_X \bm V_X
	\]
	are identical. 
The matrix $\tilde{\bm F}$ is the coefficient matrix 
    of the low-rank approximation $\tilde{\mu} \in \tilde{\Hcal}$,
while $\bar{\bm F}$ arises naturally when replacing the full kernel
matrix $\bm K_X$ with its low-rank 
    approximation $\bm L_X \bm L_X^\top$ in the regularized inverse. 
	We first recall the Woodbury matrix identity
	\[
	(\bm L_X \bm L_X^\top + n \lambda \bm I_n)^{-1} \bm L_X
	= \tfrac{1}{n \lambda}\!\left( \bm I_n
	- \bm L_X (\bm L_X^\top \bm L_X + n 
\lambda \bm I_{m_X})^{-1} \bm L_X^\top \right) \bm L_X.
	\]
	Multiplying both sides by $\bm V_X$ and using 
  $\bm L_X^\top \bm L_X = \bm V_X \bm \Lambda_X \bm V_X^\top$, we obtain
	\begin{align*}
		(\bm L_X \bm L_X^\top + n \lambda \bm I_n)^{-1} \bm L_X \bm V_X
		&= \frac{1}{n \lambda}\!\left[
		\bm L_X \bm V_X
		- \bm L_X \bm V_X
		(\bm \Lambda_X + n \lambda \bm I_{m_X})^{-1} \bm \Lambda_X
		\right] \\
		&= \bm L_X \bm V_X (\bm \Lambda_X + n \lambda \bm I_{m_X})^{-1}.
	\end{align*}
	The last equality follows from the scalar identity, valid elementwise 
  for each eigenvalue $\lambda_i$:
	\[
	\frac{1}{n \lambda}\!\left(1 - \frac{\lambda_i}{\lambda_i + n \lambda}\right)
	= \frac{1}{\lambda_i + n \lambda}.
	\]
	Substituting back, we find
	\begin{align*}
    \bar{\bm F}
	= (\bm L_Y \bm V_Y)^\top (\bm L_X \bm V_X)(\bm \Lambda_X + n \lambda \bm I_{m_X})^{-1}
    = \tilde{\bm F}.
	\end{align*}
Note that although Proposition~\ref{prop:coeff_err} is only stated for $\lambda > 0$,
the identity $\tilde{\bm F} = \bar{\bm F}$ holds for $\lambda = 0$ as well.

We are now ready to compute $
	{\| \bm F_{\tilde\Hcal}-\tilde{\bm F} \|_F^2}=
	{\| \bm F_{\tilde\Hcal}-\bar{\bm F} \|_F^2}$.
	 Define
	\[
	\bm A \isdef \bm F - (\bm L_X\bm L_X^\top + n\lambda \bm I_n)^{-1},
	\]
	so that
	\[
	\bm F_{\tilde\Hcal} - \bar{\bm F}
	= (\bm L_Y\bm V_Y)^\top  \bm A  \bm L_X\bm V_X.
	\]
	Using the submultiplicative property of the Frobenius norm, one has
	\begin{equation}\label{eq:three_prod}
		\| (\bm L_Y\bm V_Y)^\top \bm A \bm L_X\bm V_X \|_F
		\le
		\| \bm L_Y\bm V_Y \|_F \| \bm A \|_F  \| \bm L_X\bm V_X \|_F.
	\end{equation}	
	Set \(\bm M \isdef \bm L_X\bm L_X^\top + n\lambda \bm I_n\) 
  and \(\bm R \isdef \bm K_X - \bm L_X\bm L_X^\top\),
  so that \(\trace(\bm R)\le\varepsilon\) and \(\bm R\) is positive 
  semidefinite, from Theorem \ref{thmChol}). Then \(\bm F = (\bm M + \bm R)^{-1}\) 
  and the resolvent identity gives
	\[
	\bm A = (\bm M + \bm R)^{-1} 
  - \bm M^{-1} = -\bm M^{-1} \bm R (\bm M + \bm R)^{-1}.
	\]
    Taking Frobenius norm and using submultiplicativity, we first write
\[
\| \bm A \|_F \le \| \bm M^{-1} \|_2 \, \| \bm R \|_F \, \| (\bm M + \bm R)^{-1} \|_2,
\]
where $\| \cdot \|_2$ is the spectral norm. In the following, we denote by $\succeq$ 
the positive semidefinite order for symmetric matrices. 
Since $\bm M \succeq n\lambda\bm I_n,$, because $\bm M - n\lambda \bm I_n$
is positive semidefinite, there holds
    \[
    \| \bm M^{-1} \|_2 \le \frac{1}{n\lambda}, \quad \text{and likewise } \| 
    (\bm M + \bm R)^{-1} \|_2 \le \frac{1}{n\lambda}.
    \]
    Therefore
	\begin{equation*}
		\| \bm A \|_F \le \frac{1}{(n\lambda)^2} \| \bm R \|_F.
	\end{equation*}
	Finally, because \(\bm R\) is symmetric positive semidefinite, its Frobenius 
  norm satisfies \(\| \bm R \|_F = \sqrt{\sum_i \lambda_i(\bm R)^2} 
  \le \sum_i \lambda_i(\bm R) = \operatorname{trace}(\bm R)\), where 
  $\lambda_i(\bm R)$ denotes the $i$-th eigenvalue of $\bm R$. Using 
  \(\operatorname{trace}(\bm R)\le\varepsilon\) yields
	\[
	\| \bm A \|_F \le \frac{\varepsilon}{(n\lambda)^2}.
	\]
	Inserting this bound into \eqref{eq:three_prod} gives
	\[
	\| \bm F_{\tilde\Hcal} - \bar{\bm F} \|_F
	\le
	\frac{\varepsilon}{(n\lambda)^2}
  \| \bm L_X \bm V_X \|_F \| \bm L_Y \bm V_Y \|_F.
	\]
 Squaring both sides and applying 
 \(\| \bm L_X \bm V_X \|_F \le \sqrt{\operatorname{trace}(\bm K_X)}\) 
 and \(\| \bm L_Y \bm V_Y \|_F \le \sqrt{\operatorname{trace}(\bm K_Y)}\) 
 yields the squared bound
 \[
\| \bm F_{\tilde\Hcal} - \tilde{\bm F} \|_F^2
\leq
\frac{\varepsilon^2}{(n\lambda)^4}
\operatorname{trace}(\bm K_X)\,\operatorname{trace}(\bm K_Y).
\]

\end{proof}

\section{CME Statistical error}\label{app:stat_error}
The study of convergence rates for regularized estimators in RKHS has a rich
history in the kernel methods literature. Early consistency results for
conditional mean embeddings were established by \citet{songetal09}, while
\citet{gruenewaelderetal12} derived minimax optimal 
rates in the well-specified setting under the restrictive assumption that 
$\mathcal{H}_\mathcal{Y}$ is finite-dimensional. For infinite-dimensional output
spaces, \citet{Caponnetto2007} provide the first convergence analysis of 
regularized least-squares with vector-valued outputs, but their results rely on 
a trace class condition on the kernel operator which is violated for the
standard choice used in CME estimation, 
and they only cover the well-specified case. This limitation was noted in the 
CME context by \citet{gruenewaelderetal12} and addressed in subsequent work. 
\citet{NEURIPS2020_f340f1b1} and \citet{HouBoya_etal2023} establish consistency 
results in the well-specified setting without the trace class restriction, 
while \citet{li2022optimal} derive sharp convergence rates for CME learning 
covering both the well-specified and misspecified settings for 
infinite-dimensional $\mathcal{H}_\mathcal{Y}$, under a boundedness assumption 
on the regression function. This boundedness requirement is subsequently removed 
in \citet{LiMeunier_etal2024}, which provides the most general convergence 
theory for vector-valued ridge regression, of which the CME is a special case, 
to date, and whose results we use here to bound the statistical error term 
$\|\mu - \hat{\mu}\|_{\gamma}$ in \eqref{eq:stat_err_Li}.

In the notation of \citet{LiMeunier_etal2024}, we denote by $\pi$ the marginal 
distribution of $X$ under $\mathbb{Q}$, by $\nu$ the marginal distribution 
of $Y$, and by $p: \Xcal \times \mathcal{F} \to \mathbb{R}_+$ 
the Markov kernel characterizing the conditional distribution of $Y$ given $X$, 
i.e., 
$$
\mathbb{Q}[Y \in A \mid X = x] = \int_A p(x, dy),\quad\text{for all x }
\in \Xcal
$$
and events $A \in \Fcal$, see, e.g., \citet{dud_02}.
We now state the assumptions required by their result, translated into our 
notation.

\begin{itemize}[itemsep=0.4em]
    \item[\textbf{(H1)}] $\Hcal_\Xcal$ is separable.

    \item[\textbf{(H2)}] $k_\Xcal(\cdot, x)$ is measurable for $\pi$-almost 
      all $x \in \Xcal$.

    \item[\textbf{(H3)}] There exists a constant $\kappa_X > 0$ such that
    $k_\Xcal(x,x) \leq \kappa_X^2$ for $\pi$-almost all $x \in \Xcal$.

  \item[\textbf{(H4)}] The eigenvalues $\{\eta_i\}_{i \in I}$ of the integral
    operator associated with $k_\Xcal$ satisfy, for some constants
    $c_2 > 0$ and $p \in (0,1]$,
    \[
      \eta_i \leq c_2\, i^{-1/p} \quad\text{for all } i \in I.
    \]
    \item[\textbf{(H5)}] For $\alpha \in [p, 1]$, the inclusion map
    $I^{\alpha,\infty}_\pi : [\Hcal_\Xcal]^\alpha \hookrightarrow
    L_\infty(\pi)$ is continuous with constant $A > 0$:
    \[
    \|I^{\alpha,\infty}_\pi\|_{[\Hcal_\Xcal]^\alpha
    \to L_\infty(\pi)} \leq A.
    \]

    \item[\textbf{(H6)}] There exist $0 < \beta \leq 2$ and a constant
    $B \geq 0$ such that the true CME $\mu_{Y|X=\cdot}$ belongs to the
    interpolation space $[\Hcal]^\beta$ with $\|\mu\|_\beta \leq B$.

   \item[\textbf{(H7)}] There exist constants $\sigma, R > 0$ such that
    \[
    \int_{\Hcal_\Ycal} \|y - \mu(x)\|^q_{\Hcal_\Ycal}
    \, p(x,dy) \leq \tfrac{1}{2}\, q!\, \sigma^2 R^{q-2}
    \]
    for $\pi$-almost all $x \in \Xcal$ and all integers $q \geq 2$.
\end{itemize}
The following result is Theorem~3 of \citet{LiMeunier_etal2024}, 
restated in our notation.

\begin{theorem}[\citet{LiMeunier_etal2024}, Theorem~3]\label{thm:stat_error}
Let \textnormal{(H1)--(H7)} hold with $0 < \beta \leq 2$, 
and let $0 \leq \gamma \leq 1$ with $\gamma < \beta$.
\begin{enumerate}
    \item If $\beta + p \leq \alpha$ and 
    $\lambda_n = \Theta\!\left(\left(n/\log^\theta n\right)^{-1/\alpha}\right)$ 
    for some $\theta > 1$, then for all $\tau > \log(5)$ and 
    sufficiently large $n$, with probability at least $1 - 5e^{-\tau}$:
    \[
    \|\hat{\mu} - \mu\|^2_{\gamma} 
    \leq \tau^2 c_1 \left(\frac{n}{
    \log^\theta n}\right)^{-\frac{\beta-\gamma}{\alpha}}.
    \]
    \item If $\beta + p > \alpha$ and 
    $\lambda_n = \Theta\!\left(n^{-1/(\beta+p)}\right)$, 
    then for all $\tau > \log(5)$ and sufficiently large $n$, 
    with probability at least $1 - 5e^{-\tau}$:
    \[
    \|\hat{\mu} - \mu\|^2_{\gamma} 
    \leq \tau^2 c_1\, n^{-\frac{\beta-\gamma}{\beta+p}}.
    \]
\end{enumerate}
Here $c_1 > 0$ is a constant independent of $n$ and $\tau$, 
and $\tau > \log(5)$ is a confidence parameter.
\end{theorem}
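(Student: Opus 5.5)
Since this statement is Theorem~3 of \citet{LiMeunier_etal2024} restated in our notation, the plan is a reduction rather than a new argument. The first step is to show that the CME estimator \eqref{eq:traditional} is exactly their vector-valued kernel ridge regression estimator. The input space is $\Xcal$ with kernel $k_\Xcal$. The output Hilbert space is $\Hcal_\Ycal$, and the output observations are the feature vectors $k_\Ycal(y_i,\cdot)$. The hypothesis space is the vector-valued RKHS of Hilbert--Schmidt operators $\Hcal_\Xcal\to\Hcal_\Ycal$, which is isometric to $\Hcal=\Hcal_\Xcal\otimes\Hcal_\Ycal$. Under this identification, the regression target $x\mapsto\E[k_\Ycal(Y,\cdot)\mid X=x]$ is precisely $\mu_{Y|X=x}$ from \eqref{CMEeq1}. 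The empirical objective \eqref{eq:traditional} then coincides with their regularized least-squares functional, including the factor conventions for $\lambda$. I will also check that the closed form \eqref{eq:conditionaldistributionembedding} matches their representer formula, so that $\hat\mu$ is literally their estimator.

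The second step is to translate the hypotheses one by one. (H1)--(H3) are the standard separability, measurability and boundedness conditions on $k_\Xcal$ relative to $\pi$. (H4) is their eigenvalue decay (capacity) assumption. (H5) is their $L_\infty$-embedding property for the power space $[\Hcal_\Xcal]^\alpha$. (H6) is their source condition, phrased through the vector-valued interpolation norms $\|\cdot\|_\beta$. (H7) is their Bernstein moment condition on the noise $k_\Ycal(Y,\cdot)-\mu(X)$ in $\Hcal_\Ycal$.

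The main point to verify in this translation is that the $\gamma$-norm used in \eqref{eq:stat_err_Li} is the same object as in their paper. This means $\|F\|_\gamma$ must be the Hilbert--Schmidt norm of $F\circ C_X^{(1-\gamma)/2}$-type, which interpolates between $L^2(\pi;\Hcal_\Ycal)$ at $\gamma=0$ and $\Hcal$ at $\gamma=1$. Once that is confirmed, the two regimes $\beta+p\le\alpha$ and $\beta+p>\alpha$ follow by reading off their choices of $\lambda_n$.

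For completeness, I would also recall the structure of their argument. They split the error as $\|\hat\mu-\mu\|_\gamma\le\|\hat\mu-\mu_\lambda\|_\gamma+\|\mu_\lambda-\mu\|_\gamma$, where $\mu_\lambda$ is the population regularized solution. The bias term is bounded by $B\lambda^{(\beta-\gamma)/2}$ using the source condition (H6) and spectral calculus. The variance term is controlled by a Bernstein inequality for Hilbert--Schmidt-valued random variables, applied to the empirical covariance operator and to the cross term. This uses the effective dimension bound $\mathcal N(\lambda)\lesssim\lambda^{-p}$ from (H4), and gives a rate of order $\tau^2\lambda^{-\gamma}\bigl(\mathcal N(\lambda)/n+\ldots\bigr)$. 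Balancing the bias and variance terms yields the stated exponents; the $\log^\theta n$ factor in case 1 arises from the concentration step when $\beta+p\le\alpha$.

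The delicate part of their proof, and the step I expect to be the obstacle if reproduced, is handling an unbounded regression function in the misspecified case $\beta<1$. There they need (H5) together with a truncation argument to keep the Bernstein constants finite. Here I would simply invoke their result rather than redo it.
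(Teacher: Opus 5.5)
Your proposal is correct and follows the paper's approach. The paper gives no independent argument: it imports Theorem~3 of \citet{LiMeunier_etal2024} after identifying the CME estimator \eqref{eq:traditional} with their vector-valued ridge regression estimator (Hilbert--Schmidt operators $\Hcal_\Xcal\to\Hcal_\Ycal\cong\Hcal$, target $x\mapsto\mu_{Y|X=x}$) and matching (H1)--(H7) with their assumptions, which is exactly your plan. One small correction to your sketch of their proof: truncation is needed only when $\beta<\alpha$, because for $\beta\ge\alpha$ the target is already bounded by (H5), but this does not affect the reduction.
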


\bibliographystyle{plainnat}
\bibliography{mss26_BIB}

\end{document}